\providecommand{\U}[1]{\protect\rule{.1in}{.1in}}
\newtheorem{theorem}{Theorem}
\newtheorem{corollary}[theorem]{Corollary}
\newtheorem{definition}[theorem]{Definition}
\newtheorem{lemma}[theorem]{Lemma}
\newtheorem{remark}[theorem]{Remark}
\newenvironment{proof}[1][Proof]{\noindent\textbf{#1.} }{\ \rule{0.5em}{0.5em}}
\renewcommand{\d}{\;\mathrm{d}}
\begin{document}


\title{mSQG equations in distributional spaces and point vortex approximation}

\author{Franco Flandoli\thanks{Scuola Normale Superiore di Pisa 
Classe di Scienze, Pisa, Italy, e-mail: franco.flandoli@sns.it}\, and Martin 
Saal\thanks{Department of Mathematics,
TU Darmstadt, Schlossgartenstr.~7,
64289 Darmstadt, Germany, e-mail:
msaal@mathematik.tu-darmstadt.de}}

\maketitle

\begin{abstract}
Existence of distributional solutions of a modified Surface Quasi-Geostrophic 
equation (mSQG) is proved for $\mu$-almost every initial condition, where
$\mu$ is a suitable Gaussian measure. The result is the by-product of
existence of a stationary solution with white noise marginal. This solution is
constructed as a limit of random point vortices, uniformly distributed and
rescaled according to the Central Limit Theorem.\footnote{MSC 2010: primary 60H15; 
secondary 35Q86, 35R60, 76B03.} 

\end{abstract}

\section{Introduction}

The Surface Quasi-Geostrophic equation (SQG) in the torus $\mathbb{T}%
^{2}=\mathbb{R}^{2}/\mathbb{Z}^{2}$ is the equation
\[%
\begin{split}
& \partial_{t}\theta+u\cdot\nabla\theta   =0,\\
& u   =\nabla^{\perp}\Lambda^{-1}\theta=\mathcal{R}^{\perp}\theta,
\end{split}
\]
where $\Lambda=(-\Delta)^{1/2}$ is the square root of the Laplacian, 
$\nabla^{\perp}=(-\partial_{y},\partial_{x})$ and $\mathcal{R}=(\mathcal{R}_1, 
\mathcal{R}_2)$ is the vector of Riesz-transforms. It has applications in both 
meteorological and oceanic flows, and it describes the temperature $\theta$ in 
a rapidly rotating stratified fluid with uniform potential vorticity (see 
\cite{HeldPierreGarnerSwanson}, \cite{Pedlosky}, \cite{Smithetal} for the 
geophysical background). In mathematics, it received a lot of attention 
because of structural similarities to the 3D Euler equations as pointed 
out in \cite{ConstantinMajdaTabak1} and \cite{ConstantinMajdaTabak2}. The 
quantity $\nabla^{\perp}$ plays a similar role for SQG as the vorticity 
does for the 3D Euler equations, and also the level sets of $\theta$ are 
analogous to vortex lines for 3D Euler equations to mention just some of 
the similarities of the systems. While for initial data $\theta_0\in H^k
(\mathbb T^2)$, the existence of unique local solutions in $C([0,T];H^k
(\mathbb{T}^2))$ for $k\geq 3$ is known since those works, the question of 
global existence of regular solutions is still open. The existence of weak 
solutions $\theta\in L^{\infty}((0,\infty);L^2(\mathbb{T}^2))$ for initial data 
in $\theta_0\in L^2(\mathbb{T}^2)$ was first established in \cite{Resnick} and 
later extended in \cite{Marchand} to the case $\theta_0\in L^p(\mathbb{T}^2)$ 
with solutions $\theta\in L^{\infty}((0,\infty);L^p(\mathbb{T}^2))$ when $4/3 < 
p<\infty$, but the uniqueness is an open problem. Note that all these results 
also hold when $\mathbb{T}^2$ is replaced by $\mathbb{R}^2$. By the technique 
of convex integration in \cite{BuckmasterShkollerVicol}, the non-uniqueness of 
weak solutions with less regularity than the above ones is established. Due to 
the properties of the nonlinearity, weak solutions to SQG can be defined even for 
distributional-valued functions $\theta\in L^2_{\text{loc}}(\mathbb{R};\dot 
H^{-1/2}(\mathbb{T}^2))$, and for sufficiently regular $\theta$, the $\dot 
H^{-1/2}$-norm is conserved, see \cite{BuckmasterShkollerVicol} and \cite{IsettVicol}
for details. However, in \cite{Staffilani2} it is shown that uniqueness can be 
restored by adding random diffusion. A solution theory in H\"older spaces is 
given in \cite{Wu}, the equation admits a local solution $\theta\in L^{\infty}
((0,T);C^r(\mathbb{R}^2)\cap L^p(\mathbb{R}^2))$ for initial data $\theta_0\in 
C^r(\mathbb{R}^2)\cap L^p(\mathbb{R}^2)$ ($r,q>1$). 

While the regularity problem for smooth
solutions is unsolved, in the recent work \cite{CastroCordobaGomez} an 
example for a non-trivial, global smooth solution with finite energy is 
constructed in the whole space setting. There are few results apart from 
the whole space $\mathbb{R}^2$ or the torus $\mathbb{T}^2$. For an open and 
bounded set $\Omega\subset \mathbb{R}^d$ ($d\geq2$) with smooth boundary in 
\cite{ConstantinNguyen}, the global existence of weak solutions  
$\theta\in L^{\infty}((0,\infty);L^2(\Omega))$ for initial data $\theta_0
\in L^2(\Omega)$ is proved. This case is more difficult because 
a helpful commutator structure in the nonlinear term is broken by the lack of 
translation invariance. 

We consider here a family of equations that links SQG to the 2D Euler equations, 
with a smoother velocity field $u$ than in SQG, called modified SQG (mSQG, 
sometimes it is also referred to as generalized SQG). It reads
\begin{equation}%
\begin{split}
& \partial_{t}\theta+u\cdot\nabla\theta =0,\\
& u   =\nabla^{\perp}\Lambda^{-1-\epsilon}\theta
\end{split}
\label{eq:msqg}%
\end{equation}
with $\epsilon\in(0,1)$. For $\epsilon=1$ we obtain the vorticity formulation of 
the 2D Euler equations and for $\epsilon=0$ SQG. For the 2D Euler equations 
the global regularity has been established (see for example \cite{MajdaBert}), 
and the above family of equations has been introduced to approach SQG 
(\cite{ChaeConstantinWu}, \cite{CordobaFontelosManchoRodrigo}). As SQG itself this 
family admits the conservation of the $L^p$-norm of solutions for $1\leq p\leq 
\infty$. For mSQG we have that similar theorems on the existence of solutions as 
for SQG hold and some additional results, including the following ones:
\begin{itemize}
\item[1.] For initial data in $\theta_0\in C^r(\mathbb{T}^2)$, there is a local 
strong solution $\theta\in L^{\infty}((0,T);C^r(\mathbb{T}^2))$ ($r>1$). The global 
existence of strong solutions is an open problem for all $\varepsilon\in(0,1]$.
\item[2.] For initial data in  $\theta_0\in L^p(\mathbb{T}^2)$, there is a global 
weak solution $\theta\in L^{\infty}((0,\infty);L^p(\mathbb{T}^2))$ ($p>4/3$). 
\item[3.] The vortex patch problem, i.e., considering as initial data the 
characteristic function of a domain with sufficiently smooth boundary (or more 
generally the sum of characteristic functions) and investigating if the boundary 
remains smooth, has a unique local solution (\cite{CordobaCordobaGancedo}, 
\cite{Gancedo}, \cite{HassainiaTaoufik}, \cite{KiselevYaoZlatos}, \cite{Rodrigo}). 
Note that here the existence of a blow-up is known in special cases 
(\cite{KiselevRyzhikYaoZlatos}). 
\item[4.]  A global flow $\psi$ defined on some probability space with values in 
$C([0,T], H^{-2-}(\mathbb{T}^2))$ for mSQG was constructed in \cite{Staffilani}. 
\end{itemize}
General domains have been studied in \cite{Nguyen} and even more singular velocity 
fields in \cite{ChaeConstantinCordobaGancedoWu}.

From a deterministic perspective, the purpose of this work is to prove
existence of solutions of class%
\[
\theta\in C\left(  \left[  0,T\right]  ;H^{-1-}\left(  \mathbb{T}^{2}\right)
\right)
\]
where $H^{-1-}\left(  \mathbb{T}^{2}\right)  =\cap_{\epsilon>0}H^{-1-\epsilon
}\left(  \mathbb{T}^{2}\right)  $. Here  $H^{s}\left(  \mathbb{T}^{2}\right)$ 
are the classical Sobolev spaces on $\mathbb{T}^{2}$ with integrability order 
$s$, defined for all real numbers $s$. Opposite to more classical existence
results in classes of more regular functions, we may prove solvability in
$C\left(  \left[  0,T\right]  ;H^{-1-}\left(  \mathbb{T}^{2}\right)  \right)
$ only for $\mu$-almost every initial condition $\theta_{0}\in H^{-1-}\left(
\mathbb{T}^{2}\right)  $, where $\mu$ is a suitable Gaussian measure supported
on $H^{-1-}\left(  \mathbb{T}^{2}\right)  $ but not on $H^{-1}\left(
\mathbb{T}^{2}\right)  $ and not on the subspace of $H^{-1-}\left(
\mathbb{T}^{2}\right)  $ of signed measures. Our result on the 
existence of such a solution is the following.
\begin{theorem}\label{theorem:existence}
For all $\epsilon\in(0,1)$, there exists a probability space $(\Xi,\mathcal{F},P)$ 
and a measurable map $\theta_{\cdot}:\Xi\times[0,T]\to H^{-1-}\left( \mathbb{T}^{2}
\right)$ ($T>0$ arbitrary) with continuous paths such that $\theta_{\cdot}$ is a 
solution to mSQG in the sense of Definition \ref{def:wnsolution} and $(\theta_{t})$ 
is stationary.
\end{theorem}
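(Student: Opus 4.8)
The plan is to realize the stationary solution as a scaling limit of a system of $N$ interacting point vortices evolving under (a regularized version of) the mSQG velocity field, placed and rescaled exactly as prescribed by the Central Limit Theorem, so that the empirical field converges to spatial white noise. Writing the Biot--Savart type kernel as $K_\epsilon=\nabla^\perp\Lambda^{-1-\epsilon}$, I would put $N$ vortices at positions $X_1^N(t),\dots,X_N^N(t)\in\mathbb T^2$ with i.i.d.\ signs $\xi_1,\dots,\xi_N\in\{\pm1\}$, let them move under the (mollified) field they generate, and set
\[
\theta_t^N=\frac{1}{\sqrt N}\sum_{i=1}^N\xi_i\,\delta_{X_i^N(t)}.
\]
Because $K_\epsilon$ is divergence free the interaction is Hamiltonian, so by Liouville's theorem the product of the uniform measure on $(\mathbb T^2)^N$ with the law of the signs is invariant under the flow; starting the vortices from this invariant measure makes the process $(\theta_t^N)_{t\in[0,T]}$ stationary. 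Since the true kernel is singular of order $|x-y|^{-(2-\epsilon)}$, a mollification $K_\epsilon^{\delta}$ is presumably needed to define the dynamics, and the final passage to the limit would couple $\delta=\delta_N\to0$ with $N\to\infty$.

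Next I would identify the fixed-time marginal. By stationarity it suffices to look at $t=0$, where the $X_i^N(0)$ are i.i.d.\ uniform and independent of the signs; testing against $f\in C^\infty(\mathbb T^2)$, the variable $\langle\theta_0^N,f\rangle$ is a normalized sum of i.i.d.\ mean-zero terms, so the Central Limit Theorem, via a characteristic-function/Cram\'er--Wold argument over finitely many test functions, gives convergence in law to a centred Gaussian of variance $\|f\|_{L^2}^2$, which is exactly the white noise marginal. I would then prove tightness of the laws of $(\theta_\cdot^N)$ on $C([0,T];H^{-1-}(\mathbb T^2))$: uniform-in-$N$ bounds on $\mathbb E\|\theta_t^N\|_{H^{-1-\epsilon'}}^2$ follow from the Fourier-side variance computation (the $1/\sqrt N$ scaling keeps these bounded, and stationarity makes them uniform in $t$), while a uniform modulus-of-continuity estimate in a weaker space $H^{-s}$ comes from controlling $\langle\theta_t^N-\theta_r^N,f\rangle$ through the equation. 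An Ascoli/Aubin--Lions type criterion then yields tightness, and any weak limit point $\theta_\cdot$ inherits both stationarity and the white noise marginal from the preceding steps.

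Finally I would pass to the limit in the weak formulation and check it against Definition~\ref{def:wnsolution}. Using $\nabla\cdot u=0$ and the antisymmetry $K_\epsilon(-z)=-K_\epsilon(z)$, the nonlinear term tested against $f$ symmetrizes to the double integral of
\[
H_\epsilon^f(x,y)=-\tfrac12\,K_\epsilon(x-y)\cdot\bigl(\nabla f(x)-\nabla f(y)\bigr),
\]
whose diagonal singularity of order $|x-y|^{-(2-\epsilon)}$ is tamed by the Lipschitz factor $\nabla f(x)-\nabla f(y)=O(|x-y|)$ down to order $|x-y|^{-(1-\epsilon)}$, so that $H_\epsilon^f\in L^2(\mathbb T^2\times\mathbb T^2)$ precisely because $\epsilon>0$ (this is exactly what fails at the SQG endpoint). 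The main obstacle lies here: the limit field is only white noise, so $\int\!\!\int H_\epsilon^f\,\theta\otimes\theta$ must be read as a second-order Wiener--It\^o integral, well defined thanks to $H_\epsilon^f\in L^2$ and needing no renormalization since the kernel vanishes on the diagonal. One must then show that the discrete quadratic forms $\int\!\!\int H_\epsilon^f\,\theta_s^N\otimes\theta_s^N$, after discarding the spurious $i=j$ self-interaction terms (where the symmetrization and the mollification interact), converge jointly with the linear terms to this limit, with enough uniformity in $s$ to integrate in time. Controlling this convergence of the singular nonlinearity, rather than the comparatively soft stationarity and CLT arguments, is where the real work of the proof will be.
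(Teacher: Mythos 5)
Your proposal follows the same architecture as the paper: stationary point vortices with i.i.d.\ uniform initial positions and CLT scaling, invariance of the product measure via Liouville's theorem, white noise marginal by the central limit theorem, tightness of the laws in $C([0,T];H^{-1-}(\mathbb{T}^2))$ from stationary moment bounds plus a time-derivative estimate, Skorokhod representation, and passage to the limit in the Schochet-symmetrized weak form, with the quadratic functional of white noise defined exactly as you describe (an $L^2(\Xi)$-limit over kernels vanishing on the diagonal, i.e.\ a second-order Wiener--It\^o integral needing no renormalization, possible precisely because $H_\phi\in L^2$ for $\epsilon>0$). The one genuinely different choice is how you define the $N$-particle dynamics: you mollify the kernel and couple $\delta_N\to 0$ with $N\to\infty$, whereas the paper proves (Theorem \ref{th:nocollision}, a D\"urr--Pulvirenti-type argument: the Lyapunov function $L=-\sum_{i\neq j}G(x_i-x_j)$, the cancellation $\nabla G\cdot\nabla^{\perp}G=0$ killing the most singular terms, invariance of Lebesgue measure under the mollified flow, and Chebyshev) that the \emph{true} singular dynamics is globally well-defined and measure-preserving for a.e.\ initial configuration; mollification there is only a proof device, and no $\delta$--$N$ coupling is ever needed. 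Your route is viable for the existence statement --- the mollified flow is still divergence-free, the empirical field satisfies the weak equation with $H^{(\delta)}_\phi$, and $\|H_\phi-H^{(\delta)}_\phi\|_{L^2}\to 0$ by the same diagonal estimate --- but it buys less: the no-collision theorem is what makes the paper's companion result (approximation by the genuine, unregularized vortex system, Theorem \ref{theorem:approximation}) possible, and it removes any need to tune the coupling. Using Rademacher rather than Gaussian intensities is harmless; the computations use only independence and mean zero.

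Two further points. First, the step you flag as ``the real work'' is resolved in the paper by a soft, uniform-in-$N$ bound, Theorem \ref{th:propthetan} iii): for any symmetric kernel $H$ vanishing on the diagonal, $\mathbb{E}\bigl[\langle\theta_t^N\otimes\theta_t^N,H\rangle^2\bigr]\leq c\,\|H\|_{L^2(\mathbb{T}^2\times\mathbb{T}^2)}^2$, proved by expanding the fourth-order sum, using independence and $\mathbb{E}[\xi_n]=0$ to retain only paired indices, excluding the pairing $n=i$, $m=j$ because $H(x,x)=0$ (your ``spurious self-interactions'' vanish identically since $K(0)=0$; nothing is discarded), and invoking invariance of the uniform law at time $t$. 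Applied with $H=H_\phi-H_\phi^m$, this gives convergence of the discrete quadratic forms uniformly in $N$ (and in $s$, by stationarity), after which the fixed-$m$ smooth kernel passes to the limit trivially along the Skorokhod subsequence; you located the crux correctly but did not supply this bound. Second, a bookkeeping slip: with the convention $\partial_t\theta+u\cdot\nabla\theta=0$ and $\mathrm{div}\,u=0$ one has $\langle u\cdot\nabla\theta,\phi\rangle=-\langle\theta u,\nabla\phi\rangle$, so the symmetrized kernel carries a plus sign, $\frac12\,K(x-y)\cdot\bigl(\nabla\phi(x)-\nabla\phi(y)\bigr)$, not the minus sign you wrote.
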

We prove only existence; uniqueness is a major open problem. Our second main result 
is the approximation of these distributional solutions by the, analogous for 
mSQG, so called point vortex solutions (we shall use this name
also for mSQG below)%
\[
\theta\left(  t\right)  =\frac{1}{\sqrt{N}}\sum_{i=1}^{N}\xi_{i}\delta
_{x_{t}^{i}}%
\]
where the dynamics of the point vortices $x_{t}^{i}$ is given by
\[
\frac{dx_{i}\left(  t\right)  }{dt}=\frac{1}{\sqrt{N}}\sum_{j=1}^{N}\xi
_{j}K\left(  x_{i}\left(  t\right)  -x_{j}\left(  t\right)  \right)  ,\qquad
i=1,\dots,N
\]
with the kernel $K$ and the intensities $\xi_{i}$ specified below. We show 
that the solution obtained in Theorem \ref{theorem:existence} can be approximated 
by the corresponding point vortex system.
\begin{theorem}\label{theorem:approximation}
The random point vortex system defined on $(\Xi,\mathcal{F},P)$
has a subsequence which converges P-a.s. to $\theta$ in $C([0,T];H^{-1-}%
(\mathbb{T}^{2}))$.
\end{theorem}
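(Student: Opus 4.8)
The plan is to obtain the statement as a compactness-and-identification result for the empirical fields
\[
\theta^{N}(t)=\frac{1}{\sqrt{N}}\sum_{i=1}^{N}\xi_{i}\,\delta_{x_{t}^{i}},
\]
where the intensities $\xi_{i}$ are i.i.d.\ centred and the initial positions $x_{0}^{i}$ are i.i.d.\ uniform on $\mathbb{T}^{2}$. The decisive structural fact is that the product of the uniform measure on positions and the law of the intensities is invariant under the point vortex flow (a Liouville-type property of the divergence-free interaction), so each process $(\theta^{N}(t))_{t\in[0,T]}$ is stationary and, by the Central Limit Theorem, its one-time marginal converges as $N\to\infty$ to the white noise on $\mathbb{T}^{2}$ that charges $H^{-1-}$ but not $H^{-1}$. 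Since $\theta$ of Theorem \ref{theorem:existence} is by construction a limit point of this family, the task is (i) to establish tightness of the laws of $\theta^{N}$ in the path space $C([0,T];H^{-1-}(\mathbb{T}^{2}))$, (ii) to upgrade weak convergence along a subsequence to $P$-almost sure convergence on the common space $(\Xi,\mathcal{F},P)$, and (iii) to identify every limit point as a solution in the sense of Definition \ref{def:wnsolution}, hence as $\theta$.

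For tightness I would establish two uniform-in-$N$ estimates. The first is a one-time marginal bound: stationarity together with the second-moment computation underlying the CLT gives $\sup_{t\in[0,T]}\mathbb{E}\,\|\theta^{N}(t)\|_{H^{-1-\epsilon_{0}}}^{2}\le C$ for every $\epsilon_{0}>0$, which mirrors the fact that the limit lives in $H^{-1-}$ but not $H^{-1}$. The second is a time-regularity bound coming from the dynamics. Testing the equation against a smooth $\varphi$ and symmetrizing the interaction gives
\[
\frac{d}{dt}\langle\theta^{N}(t),\varphi\rangle
=\frac{1}{N}\sum_{i\neq j}\xi_{i}\xi_{j}\,H_{\varphi}(x_{t}^{i},x_{t}^{j}),
\qquad
H_{\varphi}(x,y)=\tfrac{1}{2}\,K(x-y)\cdot\bigl(\nabla\varphi(x)-\nabla\varphi(y)\bigr),
\]
the diagonal $i=j$ cancelling by antisymmetry of $K$. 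Since $K$ is homogeneous of degree $\epsilon-2$ and $\nabla\varphi(x)-\nabla\varphi(y)=O(|x-y|)$, the kernel $H_{\varphi}$ has only a mild, square-integrable singularity, $H_{\varphi}\in L^{2}(\mathbb{T}^{2}\times\mathbb{T}^{2})$ for every $\epsilon\in(0,1)$. A second-moment estimate of this degenerate $U$-statistic then yields $\mathbb{E}\,|\tfrac{d}{dt}\langle\theta^{N}(t),\varphi\rangle|^{2}\le C_{\varphi}$, and summing over a Sobolev basis produces a uniform bound $\mathbb{E}\,\|\partial_{t}\theta^{N}\|_{L^{2}(0,T;H^{-\beta})}^{2}\le C$ for a suitably large $\beta$. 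Combining this with the marginal bound through the Aubin--Lions--Simon compactness criterion and the compact embedding $H^{-1-\epsilon_{0}}\hookrightarrow H^{-1-\epsilon}$ ($\epsilon>\epsilon_{0}$) yields tightness of the laws of $(\theta^{N})$ in $C([0,T];H^{-1-}(\mathbb{T}^{2}))$.

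With tightness in hand, I extract a weakly convergent subsequence and apply the Skorokhod representation theorem to realize it, together with its limit, as a $P$-almost surely convergent sequence on a single probability space, taken to be the space $(\Xi,\mathcal{F},P)$ carrying $\theta$. It then remains to verify that the almost sure limit solves mSQG in the sense of Definition \ref{def:wnsolution}. The linear contributions pass to the limit by continuity of the pairings with test functions along the convergence in $C([0,T];H^{-1-})$. The crucial step, which I expect to be the main obstacle, is the nonlinear term $\frac{1}{N}\sum_{i\neq j}\xi_{i}\xi_{j}H_{\varphi}(x_{t}^{i},x_{t}^{j})$: since the limit field is only a distribution, the product $u\cdot\nabla\theta$ has no pointwise meaning, and its limit must be identified with the double (second-order) Wiener--It\^o integral of $H_{\varphi}$ against the white noise, a well-defined object precisely because $H_{\varphi}\in L^{2}(\mathbb{T}^{2}\times\mathbb{T}^{2})$. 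Proving this convergence requires a uniform-integrability/fourth-moment control of the degenerate $U$-statistic above, the vanishing of the renormalized diagonal, and the matching of the resulting second-chaos limit with the nonlinearity appearing in Definition \ref{def:wnsolution}. Once any limit point is shown to be a solution with white noise marginal in this sense, it coincides with the stationary process $\theta$ of Theorem \ref{theorem:existence}, so the chosen subsequence converges to $\theta$ $P$-almost surely in $C([0,T];H^{-1-}(\mathbb{T}^{2}))$, which is the assertion.
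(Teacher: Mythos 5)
Your tightness and limit-passage machinery is essentially the paper's: stationarity of $\theta^{N}$ from the Liouville property, the second-moment bound for the off-diagonal $U$-statistic using $H_{\varphi}\in L^{2}(\mathbb{T}^{2}\times\mathbb{T}^{2})$ (valid exactly because $\epsilon>0$ tames the singularity to $|x-y|^{\epsilon-1}$), the time-derivative bound summed over the Fourier basis (the paper takes $W^{1,2}((0,T);H^{-\gamma})$ with $\gamma>3$ since $\|H_{e_{k}}\|_{L^{2}}\le c|k|^{2}$), an Aubin--Lions--Simon compact embedding into $C([0,T];H^{-1-})$, Prokhorov, Skorokhod, and then a three-term argument for the nonlinearity via symmetric approximations $H^{m}_{\varphi}$ vanishing on the diagonal, with the error uniform in $N$ thanks to the moment bound. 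All of this matches the paper's Lemma \ref{lemma:qnconvweakly} and the proof of Theorem \ref{th:mainresult}.

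However, your final identification step contains a genuine logical gap: you conclude that \emph{every} limit point, being a stationary white noise solution, ``coincides with the stationary process $\theta$ of Theorem \ref{theorem:existence}.'' That inference requires uniqueness of white noise solutions (or at least of stationary ones), which the paper explicitly states is a major open problem; nothing in Definition \ref{def:wnsolution} pins the limit down. The paper avoids this entirely by proving Theorems \ref{theorem:existence} and \ref{theorem:approximation} \emph{simultaneously}: $\theta$ is \emph{defined} as the $P$-a.s.\ limit, in $C([0,T];H^{-1-}(\mathbb{T}^{2}))$, of the Skorokhod-represented point vortex systems along the chosen subsequence, and then shown to be a white noise solution. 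The approximation statement is thus true by construction for that particular subsequence and that particular $\theta$ --- no identification of other limit points is needed or possible. Your own remark that ``$\theta$ is by construction a limit point of this family'' is the correct way out; you should conclude from it directly rather than through the unavailable uniqueness. A secondary point you gloss over: after Skorokhod, the new processes a priori only share the laws $Q_{N_{k}}$; to speak of ``the random point vortex system defined on $(\Xi,\mathcal{F},P)$'' one must verify that the representatives are again of the form $\frac{1}{\sqrt{N_{k}}}\sum_{n}\xi_{n}\delta_{X^{n,N_{k}}_{t}}$ with the vector $\left((\xi_{1},X^{1,N_{k}}_{0}),\dots\right)$ distributed as $\lambda^{0}_{N}$ and solving the ODE system; the paper handles this by extending the probability space via Lemma 28 of the cited 2D Euler paper.
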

In some cases it has been proved that point vortices are limits of regular
$\theta$ -patches (for Euler equations see \cite{Flandoli}, for a class of 
mSQG models see \cite{GeldRomitoPrep}), when this occurs it implies that the 
distributional solutions constructed here are also limits of regular patches.

From a stochastic perspective, we prove existence of a stationary solution of
mSQG having marginals with the white noise law, hence supported on
$H^{-1-}\left(  \mathbb{T}^{2}\right) $. The stationary solution is
constructed as a weak limit of random point vortices, distributed uniformly in
space, with CLT-scaling of intensities. 

The results proved here for mSQG model are parallel to those obtained in
\cite{Flandoli} for 2D Euler equations. Limited to the existence of a
stationary solutions with white noise marginal, there is intersection with the
results of \cite{Staffilani}, proved by a different method, along the lines of
\cite{AlbCru}\ (based on Galerkin approximation instead of point vortices,
which makes less clear the connection with other classes of solutions to the
equation). Results on mSQG model and point vortices, in directions different
from ours, have been recently proved by \cite{GeldRomito}, \cite{GeldRomitoPrep}. 
For the whole space setting the idea of point vortices for mSQG is sketched in 
\cite{CavallaroGarraMarchioro}.

Last we want to mention that there is a second way to introduce some smoothing 
into the equation by adding viscosity of the form $\Lambda^{\gamma}\theta$ to 
the evolution equation for $\theta$ with $\gamma>0$. This dissipative SQG has 
been investigated by many authors; we refer here to \cite{Constantin} 
and the references therein for more information.

\section{The nonlinear term in the weak formulation}

\subsection{White noise}

We denote by $\left\{  e_{n}\right\}  $ the complete orthonormal system in
$L^{2}\left(  \mathbb{T}^{2},\mathbb{C}\right)  $ given by $e_{n}\left(
x\right)  =e^{2\pi in\cdot x}$, $n\in\mathbb{Z}^{2}$. Given a distribution
$\omega\in C^{\infty}\left(  \mathbb{T}^{2}\right)  ^{\prime}$ and a test
function $\phi\in C^{\infty}\left(  \mathbb{T}^{2}\right)  $, we denote by
$\left\langle \omega,\phi\right\rangle $ the duality between $\omega$ and
$\phi$ (namely $\omega\left(  \phi\right)  $), and we use the same symbol for
the inner product of $L^{2}\left(  \mathbb{T}^{2}\right)  $. We set
$\widehat{\omega}\left(  n\right)  =\left\langle \omega,e_{n}\right\rangle $,
$n\in\mathbb{Z}^{2}$, and we define, for each $s\in\mathbb{R}$, the space
$H^{s}\left(  \mathbb{T}^{2}\right)  $ as the space of all distributions
$\omega\in C^{\infty}\left(  \mathbb{T}^{2}\right)  ^{\prime}$ such that
\[
\left\Vert \omega\right\Vert _{H^{s}}^{2}:=\sum_{n\in\mathbb{Z}^{2}}\left(
1+\left\vert n\right\vert ^{2}\right)  ^{s}\left\vert \widehat{\omega}\left(
n\right)  \right\vert ^{2}<\infty.
\]
We use similar definitions and notations for the space $H^{s}\left(
\mathbb{T}^{2},\mathbb{C}\right)  $ of complex valued functions. In the space
$H^{-1-}\left(  \mathbb{T}^{2}\right)  =%
{\displaystyle\bigcap\limits_{\epsilon>0}}
H^{-1-\epsilon}\left(  \mathbb{T}^{2}\right)  $, we consider the metric%
\[
d_{H^{-1-}}\left(  \omega,\omega^{\prime}\right)  =\sum_{n=1}^{\infty}%
2^{-n}\left(  \left\Vert \omega-\omega^{\prime}\right\Vert _{H^{-1-\frac{1}%
{n}}}\wedge1\right)  .
\]
Convergence in this metric is equivalent to convergence in $H^{-1-\epsilon
}\left(  \mathbb{T}^{2}\right)  $ for every $\epsilon>0$. The space
$H^{-1-}\left(  \mathbb{T}^{2}\right)  $ with this metric is complete and
separable. We denote by $\mathcal{X}:=C\left(  \left[  0,T\right]
;H^{-1-}\left(  \mathbb{T}^{2}\right)  \right)  $ the space of continuous
functions with values in this metric space;\ a function is in $\mathcal{X}$ if
and only if it is in $C\left(  \left[  0,T\right]  ;H^{-1-\epsilon}\left(
\mathbb{T}^{2}\right)  \right)  $ for every $\epsilon>0$. The distance in
$C\left(  \left[  0,T\right]  ;H^{-1-}\left(  \mathbb{T}^{2}\right)  \right)
$ is given by $d_{\mathcal{X}}\left(  \omega_{\cdot},\omega_{\cdot}^{\prime
}\right)  =\sup_{t\in\left[  0,T\right]  }d_{H^{-1-}}\left(  \omega_{t}%
,\omega_{t}^{\prime}\right)  $, which makes $\mathcal{X}$ a Polish space.

For $s>0$, the spaces $H^{s}\left(  \mathbb{T}^{2}\right)  $ and
$H^{-s}\left(  \mathbb{T}^{2}\right)  $ are dual each other. By $H^{s+}\left(
\mathbb{T}^{2}\right)  $, we shall therefore mean the space $%
{\displaystyle\bigcup\limits_{\epsilon>0}}
H^{s+\epsilon}\left(  \mathbb{T}^{2}\right)  $. We will use this notation also 
in the case of the space $H^{2+}\left(  \mathbb{T}^{2}\times\mathbb{T}%
^{2}\right)  $, which is similarly defined.

We start recalling the well known notion of white noise, reviewing some of its
main properties used in the sequel.

White noise on $\mathbb{T}^{2}$ is by definition a Gaussian
distributional-valued stochastic process $\omega:\Xi\rightarrow C^{\infty
}\left(  \mathbb{T}^{2}\right)  ^{\prime}$, defined on some probability space
$\left(  \Xi,\mathcal{F},\mathbb{P}\right)  $, such that
\begin{equation}
\mathbb{E}\left[  \left\langle \omega,\phi\right\rangle \left\langle
\omega,\psi\right\rangle \right]  =\left\langle \phi,\psi\right\rangle
\label{def WN}%
\end{equation}
for all $\phi,\psi\in C^{\infty}\left(  \mathbb{T}^{2}\right)  $ (Gaussian
means that the real-valued r.v. $\left\langle \omega,\phi\right\rangle $ is
Gaussian, for every $\phi\in C^{\infty}\left(  \mathbb{T}^{2}\right)  $). We
have denoted by $\left\langle \omega\left(  \theta\right)  ,\phi\right\rangle
$ the duality between the distribution $\omega\left(  \theta\right)  $ (for
some $\theta\in\Xi$)\ and the test function $\phi\in C^{\infty}\left(
\mathbb{T}^{2}\right)  $. These properties uniquely characterize the law of
$\omega$. In more heuristic terms, as it is often written in the physics
literature,%
\[
\mathbb{E}\left[  \omega\left(  x\right)  \omega\left(  y\right)  \right]
=\delta\left(  x-y\right)
\]
since double integration of this identity against $\phi\left(  x\right)
\psi\left(  y\right)  $ gives (\ref{def WN}). White noise exists: it is
sufficient to take the complete orthonormal system $\left\{  e_{n}\right\}
_{n\in\mathbb{Z}^{2}}$ of $L^{2}\left(  \mathbb{T}^{2},\mathbb{C}\right)  $
as above, a probability space $\left(\Xi,\mathcal{F},\mathbb{P}\right)  $ 
supporting a sequence of independent standard Gaussian variables 
$\left\{  G_{n}\right\}  _{n\in\mathbb{Z}^{2}}$, and consider the series%
\[
\omega=\sqrt{2}\operatorname{Re}\sum_{n\in\mathbb{Z}^{2}}G_{n}e_{n}\text{.}%
\]
The partial sums $\omega_{N}^{\mathbb{C}}\left(  \theta,x\right)
=\sum_{\left\vert n\right\vert \leq N}G_{n}\left(  \theta\right)  e_{n}\left(
x\right)  $ are well defined complex valued random fields with square
integrable paths, $\omega_{N}:\Xi\rightarrow L^{2}\left(  \mathbb{T}%
^{2},\mathbb{C}\right)  $. For every $\epsilon>0$, $\left\{  \omega
_{N}^{\mathbb{C}}\right\}  _{N\in\mathbb{N}}$ is a Cauchy sequence in
$L^{2}\left(  \Xi;H^{-1-\epsilon}\left(  \mathbb{T}^{2},\mathbb{C}\right)
\right)  $, because
\begin{multline*}
\mathbb{E}\left[  \left\Vert \omega_{N}^{\mathbb{C}}\left(  \theta,x\right)
-\omega_{M}^{\mathbb{C}}\left(  \theta,x\right)  \right\Vert _{H^{-1-\epsilon
}}^{2}\right]  =\mathbb{E}\left[  \sum_{M<\left\vert n\right\vert \leq
N}\left(  1+\left\vert n\right\vert ^{2}\right)  ^{-1-\epsilon}\left\vert
G_{n}\right\vert ^{2}\right] \\
=\sum_{M<\left\vert n\right\vert \leq N}\left(
1+\left\vert n\right\vert ^{2}\right)  ^{-1-\epsilon}.
\end{multline*}
The limit $\omega^{\mathbb{C}}$ in $L^{2}\left(  \Xi;H^{-1-\epsilon}\left(
\mathbb{T}^{2},\mathbb{C}\right)  \right)  $ thus exists, and $\omega=\sqrt
{2}\operatorname{Re}\omega^{\mathbb{C}}$ is a white noise because (doing
rigorously the computation on the partial sums and then taking the limit) it
is centered and for $\phi,\psi\in C^{\infty}\left(  \mathbb{T}^{2}\right)  $,
\begin{align*}
\mathbb{E}\left[  \left\langle \omega,\phi\right\rangle \left\langle
\omega,\psi\right\rangle \right]   &  =\operatorname{Re}\mathbb{E}\left[
\left\langle \omega_{C},\phi\right\rangle \left\langle \overline{\omega_{C}%
},\psi\right\rangle \right]  =\operatorname{Re}\sum_{n,m\in\mathbb{Z}^{2}%
}\left\langle e_{n},\phi\right\rangle \overline{\left\langle e_{m}%
,\psi\right\rangle }\mathbb{E}\left[  G_{n}G_{m}\right] \\
&  =\operatorname{Re}\sum_{n\in\mathbb{Z}^{2}}\left\langle e_{n}%
,\phi\right\rangle \overline{\left\langle e_{n},\psi\right\rangle
}=\left\langle \phi,\psi\right\rangle .
\end{align*}
[One obtains the same result by taking $\omega=\sum_{n\in\mathbb{Z}^{2}}%
G_{n}e_{n}$ where $\mathbb{Z}^{2}\backslash\left\{  0\right\}  $ is
partitioned as $\mathbb{Z}^{2}=\Lambda\cup\left(  -\Lambda\right)  $; $G_{n} $
are i.i.d. $N\left(  0,1\right)  $ on $\Lambda\cup\left\{  0\right\}  $ and
$G_{-n}=\overline{G_{n}}$ for $n\in\Lambda$.] The law $\mu$ of the measurable
map $\omega:\Xi\rightarrow H^{-1-\epsilon}\left(  \mathbb{T}^{2}\right)  $ is
a Gaussian measure (it is sufficient to check that $\left\langle \omega
,\phi\right\rangle $ is Gaussian for every $\phi\in C^{\infty}\left(
\mathbb{T}^{2}\right)  $, and this is true since $\left\langle \omega
,\phi\right\rangle $ is the $L^{2}\left(  \Xi\right)  $-limit of the Gaussian
variables $\sum_{\left\vert n\right\vert \leq N}G_{n}\left\langle e_{n}%
,\phi\right\rangle $). The measure $\mu$ is supported by $H^{-1-}\left(
\mathbb{T}^{2}\right)  $ but not by $H^{-1}\left(  \mathbb{T}^{2}\right)  $;
namely, we have%
\[
\mu\left(  H^{-1}\left(  \mathbb{T}^{2}\right)  \right)  =0.
\]
This follows from
\[
\mathbb{E}\left[  \left\Vert \omega^{\mathbb{C}}\right\Vert _{H^{-1}}%
^{2}\right]  =\sum_{n\in\mathbb{Z}^{2}}\left(  1+\left\vert n\right\vert
^{2}\right)  ^{-1}=+\infty.
\]

The measure $\mu$ is sometimes denoted heuristically as%
\[
\mu\left(  d\omega\right)  =\frac{1}{Z}\exp\left(  -\frac{1}{2}\int%
_{\mathbb{T}^{2}}\omega^{2}dx\right)  d\omega
\]
and called the \textit{enstrophy measure}.\\
The notation "$d\omega$" has no
meaning (unless interpreted as a limit of measures on finite dimensional
Euclidean spaces), it just reminds the structure of centered nonsingular Gaussian
measures in $\mathbb{R}^{n}$, that is $\mu_{n}\left(  d\omega_{n}\right)
=\frac{1}{Z_{n}}\exp\left(  -\frac{1}{2}\left\langle Q_{n}^{-1}\omega
_{n},\omega_{n}\right\rangle _{\mathbb{R}^{n}}\right)  d\omega_{n}$ where
$d\omega_{n}$ is Lebesgue measure in $\mathbb{R}^{n}$ and $Q_{n}$ is the
covariance matrix. The notation $\int_{\mathbb{T}^{2}}\omega^{2}dx$ alludes to
the fact that $\mu$, heuristically considered as a Gaussian measure on
$L^{2}\left(  \mathbb{T}^{2}\right)  $ (this is not possible, $\mu\left(
L^{2}\left(  \mathbb{T}^{2}\right)  \right)  =0$), has covariance equal to the
identity: if $Q=Id$, then $\left\langle Q^{-1}\omega,\omega\right\rangle
_{L^{2}}=\int_{\mathbb{T}^{2}}\omega^{2}dx$. The fact that in $L^{2}\left(
\mathbb{T}^{2}\right)  $ the covariance operator $Q$, heuristically defined as%
\[
\left\langle Q\omega,\omega\right\rangle _{L^{2}}=\mathbb{E}\left[
\left\langle \omega,\phi\right\rangle _{L^{2}}\left\langle \omega
,\psi\right\rangle _{L^{2}}\right]
\]
is the identity in the case of the law $\mu$ of white noise, is a simple
"consequence" (the argument is not rigorous ab initio) of the definition
(\ref{def WN}) of white noise.

\subsection{Weak formulation of mSQG}

Before we can turn to the construction of a white noise solution, we need to
fix our notion of solution in such a weak setting; especially, we need to find
a good interpretation of the nonlinear term. To this end, we integrate
(\ref{eq:msqg}) in time. Using that $u$ is divergence free, we get for any
test-function $\phi\in C^{\infty}(\mathbb{T}^{2})$
\begin{align*}
\left<  \theta_{t},\phi\right>  =\left<  \theta_{0},\phi\right>  + \int%
_{0}^{t} \left<  \theta_{s} u_{s},\nabla\phi\right>  \d s
\end{align*}
From $u=\nabla^{\perp}\Lambda^{-1-\epsilon}\theta$, it follows
\begin{align*}
u_{t}(x)=\int_{\mathbb{T}^{2}} K(x-y) \theta_{t}(y) \d y
\end{align*}
where the kernel $K$ is in the whole space given by $K(x)=c \frac{x^{\perp}%
}{|x|^{3-\epsilon}}$ for some constant $c>0$. In the torus, we have $K$ smooth 
for $x\neq0$, $K(x-y)=-K(y-x)$ and $K(x)\leq\frac{c}{|x|^{2-\epsilon}}$ for 
$|x|$ small. We set $K(0)=0$. By the symmetry of the kernel, we obtain the 
Schochet symmetrisation
\begin{align*}
\int_{0}^{t} \left<  \theta_{s} u_{s},\nabla\phi\right>  \d s  &  = \int%
_{0}^{t} \int_{\mathbb{T}^{2}}\int_{\mathbb{T}^{2}}\theta_{s}(x) K(x-y)
\theta_{s}(y) \nabla\phi(x) \d y \d x \d s\\
&  = \frac12 \int_{0}^{t}\int_{\mathbb{T}^{2}}\int_{\mathbb{T}^{2}}\theta
_{s}(x) K(x-y) \theta_{s}(y) \nabla(\phi(x)-\phi(y)) \d y \d x \d s\\
&  = \frac12 \int_{0}^{t}\int_{\mathbb{T}^{2}}\int_{\mathbb{T}^{2}}\theta
_{s}(x) \theta_{s}(y) H_{\phi}(x,y) \d y \d x \d s
\end{align*}
where we set $H_{\phi}(x,y) = K(x-y)\nabla(\phi(x)-\phi(y))$. Such a 
symmetrisation was carried out in \cite{Schochet} for the 2D Euler equations 
and it is in the same spirit as rewriting the nonlinearity with the commutator 
used for example in \cite{BuckmasterShkollerVicol}, \cite{Marchand} and 
\cite{Resnick}. Due to the crucial role of $H_{\phi}$ throughout this work 
let us give a rigorous definition and collect some properties of it.

\begin{definition}
Let $K:\mathbb{T}^{2}\to\mathbb{R}^{2}$ be smooth for $x\neq 0$, $K(x)=-K(-x)$ 
for all $x\in\mathbb{T}^{2}$, $K(0)=0$, $K(x)\leq\frac{c}{|x|^{2-\epsilon}}$ for 
$|x|$ small and $\phi\in C^{\infty}(\mathbb{T}^{2})$. Then the function
\begin{align*}
H_{\phi}:\mathbb{T}^{2}\times\mathbb{T}^{2}\to\mathbb{R}, \; H_{\phi}(x,y) =
K(x-y)\nabla(\phi(x)-\phi(y))
\end{align*}
is in $L^{2}(\mathbb{T}^{2}\times\mathbb{T}^{2})$, $H_{\phi}(y,x)=H_{\phi
}(x,y)$ for all $x,y\in\mathbb{T}^{2}$, $H_{\phi}(x,x)=0$ and $H_{\phi}$ is
smooth outside of the diagonal.
\end{definition}

Our next purpose is to define
\begin{align*}
\int_{\mathbb{T}^{2}}\int_{\mathbb{T}^{2}}\omega(x) \omega(y) H_{\phi}(x,y) \d
y \d x \d s
\end{align*}
when $\omega:\Xi\to C^{\infty}(\mathbb{T}^{2})^{\prime}$ is a white noise.

If $\omega\in C^{\infty}(\mathbb{T}^{2})^{\prime}$ is a distribution, we can
define a distribution $\omega\otimes\omega\in C^{\infty}(\mathbb{T}^{2}
\times\mathbb{T}^{2})^{\prime}$ by setting
\begin{align*}
\left<  \omega\otimes\omega, \phi\otimes\psi\right>  = \left<  \omega,
\phi\right>  \left<  \omega, \psi\right>  \qquad(\phi,\psi\in C^{\infty
}(\mathbb{T}^{2}))
\end{align*}
where $(\phi\otimes\psi)(x,y)=\phi(x)\psi(y)$ and extend this to all $f\in
C^{\infty}(\mathbb{T}^{2} \times\mathbb{T}^{2})$ by density arguments. If
$\omega\in H^{-s}(\mathbb{T}^{2})$ for some $s>0$, then $\omega\otimes\omega\in
H^{-2s}(\mathbb{T}^{2}\times\mathbb{T}^{2})$. In the case of $\omega$ being a
white noise, we have the following result for this distribution (taken from 
\cite{Flandoli}).

\begin{corollary}
Let $\omega:\Xi\to C^{\infty}(\mathbb{T}^{2})^{\prime}$ be a white noise and
$f\in H^{2+}(\mathbb{T}^{2}\times\mathbb{T}^{2})$.

\begin{itemize}
\item[i)] We have $\mathbb{E}[\left<  \omega\otimes\omega,f\right>  ]
=\int_{\mathbb{T}^{2}}f(x,x)\d x$.

\item[ii)] If $f$ is symmetric, we have
\begin{align*}
\mathbb{E}\left[  |\left<  \omega\otimes\omega,f\right>  - \mathbb{E}[\left<
\omega\otimes\omega,f\right>  ]|^{2} \right]  =\int_{\mathbb{T}^{2}}%
\int_{\mathbb{T}^{2}}f^{2}(x,y)\d x \d y.
\end{align*}

\end{itemize}
\end{corollary}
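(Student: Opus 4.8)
The plan is to establish both identities first for the smooth truncations $\omega_{N}=\sum_{|n|\le N}G_{n}e_{n}$ used above to construct the white noise, for which $\langle\omega_{N}\otimes\omega_{N},f\rangle$ is a genuine finite quadratic form in jointly Gaussian variables, and then to pass to the limit $N\to\infty$ using the regularity $f\in H^{2+}(\mathbb{T}^{2}\times\mathbb{T}^{2})$. The structural inputs I need are only the covariance \eqref{def WN}, the (bi)linearly extended definition of $\omega\otimes\omega$ on tensor test functions, and the Sobolev embedding $H^{2+}(\mathbb{T}^{2}\times\mathbb{T}^{2})=H^{2+}(\mathbb{T}^{4})\hookrightarrow C(\mathbb{T}^{4})$, valid because $2>4/2$. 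This last point is what makes the restriction to the diagonal $x\mapsto f(x,x)$ and the trace $\int_{\mathbb{T}^{2}}f(x,x)\,dx$ meaningful and continuous in the $H^{2+\delta}$-norm, and it is the reason the hypothesis is phrased as $H^{2+}$ rather than $H^{2}$.

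For part (i) I would expand $f$ in the orthonormal tensor basis, $f=\sum_{n,m}c_{n,m}\,e_{n}\otimes e_{m}$, and use the extended definition of $\omega\otimes\omega$ to write $\langle\omega\otimes\omega,f\rangle=\sum_{n,m}c_{n,m}\,\widehat{\omega}(n)\widehat{\omega}(m)$, with $\widehat{\omega}(n)=\langle\omega,e_{n}\rangle$. Taking expectations and using that \eqref{def WN}, extended bilinearly to the complex exponentials together with $\int_{\mathbb{T}^{2}}e_{n}e_{m}=\delta_{n+m,0}$, yields $\mathbb{E}[\widehat{\omega}(n)\widehat{\omega}(m)]=\delta_{n+m,0}$, so that $\mathbb{E}[\langle\omega\otimes\omega,f\rangle]=\sum_{n}c_{n,-n}$. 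Since $\int_{\mathbb{T}^{2}}f(x,x)\,dx=\sum_{n,m}c_{n,m}\int_{\mathbb{T}^{2}}e_{n}e_{m}\,dx=\sum_{n}c_{n,-n}$, this is exactly the claimed diagonal trace. The interchange of expectation and summation and the absolute convergence of $\sum_{n}c_{n,-n}$ follow from Cauchy--Schwarz against $\sum_{n}(1+2|n|^{2})^{-(2+\delta)}<\infty$ and $f\in H^{2+\delta}$; equivalently one reads everything off the truncations $\omega_{N}$ and lets $N\to\infty$.

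For part (ii) the centred variable is $\langle\omega\otimes\omega,f\rangle-\mathbb{E}[\cdots]=\sum_{n,m}c_{n,m}\,(\widehat{\omega}(n)\widehat{\omega}(m)-\delta_{n+m,0})$, so its second moment is a fourth moment of the Gaussian family $\{\widehat{\omega}(n)\}$. The main tool is the Wick--Isserlis formula $\mathbb{E}[\widehat{\omega}(n)\widehat{\omega}(m)\widehat{\omega}(n')\widehat{\omega}(m')]=\delta_{n+m,0}\delta_{n'+m',0}+\delta_{n+n',0}\delta_{m+m',0}+\delta_{n+m',0}\delta_{m+n',0}$, valid because the real and imaginary parts of the $\widehat{\omega}(n)$ form a centred real Gaussian family. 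The first contraction reproduces $(\mathbb{E}[\langle\omega\otimes\omega,f\rangle])^{2}$ and is cancelled by subtracting the mean, leaving the two off-diagonal contractions $\sum_{n,m}c_{n,m}c_{-n,-m}$ and $\sum_{n,m}c_{n,m}c_{-m,-n}$. Reality of $f$ gives $c_{-n,-m}=\overline{c_{n,m}}$ and symmetry gives $c_{n,m}=c_{m,n}$, so each contraction equals $\sum_{n,m}|c_{n,m}|^{2}=\|f\|_{L^{2}(\mathbb{T}^{2}\times\mathbb{T}^{2})}^{2}=\int_{\mathbb{T}^{2}}\int_{\mathbb{T}^{2}}f^{2}\,dx\,dy$; the variance is their sum, and pinning down the resulting constant is discussed next. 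I would again run this on the truncations and pass to the limit, using $f\in H^{2+}$ only to secure absolute convergence of the double sums.

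The step needing genuine care is precisely this combinatorial constant in part (ii): the Isserlis expansion produces \emph{two} connected contractions besides the disconnected one, and---consistently with the elementary identity $\mathrm{Var}(g^{\top}Ag)=2\,\mathrm{tr}(A^{2})$ for a standard Gaussian vector $g$ and symmetric $A$---each contributes a full copy of $\|f\|_{L^{2}}^{2}$, so that the naive reckoning gives $2\int\!\int f^{2}$; matching the exact normalisation displayed in the statement therefore hinges on the precise convention adopted for the covariance in \eqref{def WN} and for the tensor pairing, and this factor should be tracked explicitly. A secondary, purely technical point common to both parts is the justification of exchanging expectation with the infinite Fourier series and of passing to the limit from $\omega_{N}$; this is exactly where $f\in H^{2+}$ enters, both to give meaning to the diagonal trace via $H^{2+}\hookrightarrow C$ and to guarantee, by Cauchy--Schwarz, that all the relevant series converge absolutely.
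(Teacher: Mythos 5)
Your argument is correct in structure, and --- an important point of comparison --- the paper itself offers \emph{no} proof of this corollary: it is quoted verbatim from \cite{Flandoli}. So yours is the only argument on the table, and it is the natural one: reduce to the jointly Gaussian family $\widehat{\omega}(n)$ via the Fourier expansion of $f$, extend the covariance \eqref{def WN} bilinearly to get $\mathbb{E}[\widehat{\omega}(n)\widehat{\omega}(m)]=\delta_{n+m,0}$, apply Wick--Isserlis, and use $H^{2+}(\mathbb{T}^{2}\times\mathbb{T}^{2})=H^{2+}(\mathbb{T}^{4})\hookrightarrow C(\mathbb{T}^{4})$ (since $2>4/2$) to make the diagonal trace meaningful and to justify passing from the truncations $\omega_{N}$. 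Part (i), the Cauchy--Schwarz control of $\sum_{n}|c_{n,-n}|$ against $\sum_{n}(1+2|n|^{2})^{-(2+\delta)}<\infty$, and the identification of the two connected contractions with $\sum_{n,m}c_{n,m}c_{-n,-m}$ and $\sum_{n,m}c_{n,m}c_{-m,-n}$ (using $c_{-n,-m}=\overline{c_{n,m}}$ from reality and $c_{n,m}=c_{m,n}$ from symmetry) are all sound.

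The one place where you hedged is in fact the one place where you should have committed: the factor $2$ is not a matter of convention, and your ``naive reckoning'' $2\iint f^{2}$ is the \emph{correct} value --- the statement as printed is off by a factor of $2$. With the paper's own definitions (covariance \eqref{def WN}, bilinear tensor pairing) there is no normalisation that removes it. Sanity check: take $f=\phi\otimes\phi$ with $\phi$ real; then $\left<\omega\otimes\omega,f\right>=\left<\omega,\phi\right>^{2}$ with $\left<\omega,\phi\right>\sim N(0,\|\phi\|_{L^{2}}^{2})$, so the variance is $2\|\phi\|_{L^{2}}^{4}=2\iint f^{2}$. Equivalently, the centred pairing is the double Wiener--It\^o integral $I_{2}(f)$, and $\mathbb{E}[I_{2}(f)^{2}]=2\|f\|_{L^{2}}^{2}$ for symmetric $f$ (the $q!$ in the second chaos). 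The discrepancy propagates into Corollary \ref{cor:defhphi}, whose final display moreover conflates $\|H_{\phi}\|_{L^{2}}$ with its square, but it is harmless for everything downstream, since only the Cauchy property in $L^{2}(\Xi)$ and bounds up to multiplicative constants are ever used (e.g.\ in Theorem \ref{th:propthetan}\,iii) and Lemma \ref{lemma:qnconvweakly}). So: state your conclusion as $\mathbb{E}\left[\left|\left<\omega\otimes\omega,f\right>-\mathbb{E}[\left<\omega\otimes\omega,f\right>]\right|^{2}\right]=2\iint f^{2}$ rather than deferring to an unspecified convention; with that, your proof is complete and correct, and it exposes a genuine typo in the statement.
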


Although our function $H_{\phi}$ is not of class $H^{2+} (\mathbb{T}^{2}%
\times\mathbb{T}^{2})$, we can use this corollary to give a meaning to $\left<
\omega\otimes\omega,H_{\phi}\right>  $ in $L^{2}(\Xi)$ by a suitable
approximation. An obvious consequence of ii) is the following.

\begin{corollary}
Let $\omega:\Xi\to C^{\infty}(\mathbb{T}^{2})^{\prime}$ be a white noise and
$\phi\in C^{\infty}(\mathbb{T}^{2})$ be given. Furthermore, let $(H_{\phi}%
^{n})_{n} \in H^{2+}(\mathbb{T}^{2}\times\mathbb{T}^{2})$ be a sequence of
symmetric functions such that
\begin{align*}
\|H_{\phi}-H_{\phi}^{n}\|_{L^{2}(\mathbb{T}^{2}\times\mathbb{T}^{2})}  &  \to0
\quad(n\to\infty)
\end{align*}
Then $\left<  \omega\otimes\omega, H_{\phi}^{n} \right>  -\mathbb{E}%
[\omega\otimes\omega, H_{\phi}^{n}]$ is a Cauchy sequence in $L^{2}(\Xi)$.
\end{corollary}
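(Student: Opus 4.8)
The plan is to reduce everything to the variance identity in part ii) of the preceding corollary, which turns the centered bilinear functional into an $L^{2}$-isometry. Write $Y_{n}:=\langle\omega\otimes\omega,H_{\phi}^{n}\rangle-\mathbb{E}[\langle\omega\otimes\omega,H_{\phi}^{n}\rangle]$ for the centered random variables in question. The first thing I would record is that the map $f\mapsto\langle\omega\otimes\omega,f\rangle$ is linear in $f$ (directly from the defining relation $\langle\omega\otimes\omega,\phi\otimes\psi\rangle=\langle\omega,\phi\rangle\langle\omega,\psi\rangle$ extended by density), so that, taking expectations and subtracting,
\[
Y_{n}-Y_{m}=\langle\omega\otimes\omega,H_{\phi}^{n}-H_{\phi}^{m}\rangle-\mathbb{E}\big[\langle\omega\otimes\omega,H_{\phi}^{n}-H_{\phi}^{m}\rangle\big].
\]

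The key step is then to apply part ii) with $f=H_{\phi}^{n}-H_{\phi}^{m}$. This $f$ is again symmetric (as a difference of symmetric functions) and of class $H^{2+}(\mathbb{T}^{2}\times\mathbb{T}^{2})$ (as a difference of such functions), so the hypotheses of the corollary are met and I obtain the exact identity
\begin{align*}
\mathbb{E}\big[|Y_{n}-Y_{m}|^{2}\big]=\int_{\mathbb{T}^{2}}\int_{\mathbb{T}^{2}}\big(H_{\phi}^{n}-H_{\phi}^{m}\big)^{2}(x,y)\d x\d y=\big\|H_{\phi}^{n}-H_{\phi}^{m}\big\|_{L^{2}(\mathbb{T}^{2}\times\mathbb{T}^{2})}^{2}.
\end{align*}
To conclude I would invoke completeness of $L^{2}$: since $H_{\phi}^{n}\to H_{\phi}$ in $L^{2}(\mathbb{T}^{2}\times\mathbb{T}^{2})$ by assumption, the sequence $(H_{\phi}^{n})$ is in particular Cauchy there, so the right-hand side above tends to $0$ as $m,n\to\infty$. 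Hence $\mathbb{E}[|Y_{n}-Y_{m}|^{2}]\to0$, which is precisely the assertion that $(Y_{n})$ is Cauchy in $L^{2}(\Xi)$.

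There is essentially no obstacle here, the statement being, as advertised, an immediate consequence of the isometry in ii). The only points deserving a word of care are that the difference $H_{\phi}^{n}-H_{\phi}^{m}$ genuinely inherits both the symmetry and the $H^{2+}$-regularity required to apply ii), and that the functional $f\mapsto\langle\omega\otimes\omega,f\rangle$ is indeed linear and takes values in $L^{2}(\Xi)$ for such $f$ (so that each $Y_{n}\in L^{2}(\Xi)$ and the expression $\mathbb{E}[|Y_{n}-Y_{m}|^{2}]$ is meaningful). Both are routine verifications, so the whole argument collapses to the two-line computation above once the isometry is in hand.
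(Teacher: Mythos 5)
Your proof is correct and coincides with the paper's intended argument: the paper states this corollary as ``an obvious consequence of ii)'' without writing out details, and your argument --- applying the variance identity of part ii) to the symmetric difference $H_{\phi}^{n}-H_{\phi}^{m}\in H^{2+}(\mathbb{T}^{2}\times\mathbb{T}^{2})$ and invoking the $L^{2}$-Cauchy property of $(H_{\phi}^{n})_{n}$ --- is exactly that implicit reasoning, made explicit.
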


Choosing the sequence $H_{\phi}^{n}$ a bit more specific, we obtain the
following convergence result. More precisely, we take a sequence such that
$\mathbb{E} [\omega\otimes\omega, H_{\phi}^{n}]$ vanishes in the limit, and
thus, we somehow normalize our limit to have expectation $0$.

\begin{corollary}
\label{cor:defhphi} Let $\omega$, $\phi$ and $(H_{\phi}^{n})_{n}$ be as 
above and assume additionally
\begin{align*}
\int_{\mathbb{T}^{2}} H_{\phi}^{n}(x,x) \d x \to0 \quad(n\to\infty).
\end{align*}
Then, $\left<  \omega\otimes\omega, H_{\phi}^{n} \right>  $ is a Cauchy
sequence in $L^{2}(\Xi)$ and we denote its limit by $\left<  \omega
\otimes\omega, H_{\phi} \right>  $. This limit does not depend on the specific
choice of $(H_{\phi}^{n})_{n}$, and we have $\mathbb{E}[|\left<  \omega
\otimes\omega,H_{\phi}\right>  |^{2}]= \|H_{\phi}\|_{L^{2}(\mathbb{T}%
^{2}\times\mathbb{T}^{2})}$.
\end{corollary}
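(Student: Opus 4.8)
The plan is to build on the two preceding corollaries. Corollary (the one before) already establishes that for any sequence of symmetric functions $H_\phi^n \in H^{2+}(\mathbb{T}^2\times\mathbb{T}^2)$ with $\|H_\phi - H_\phi^n\|_{L^2}\to 0$, the centered variables $\langle\omega\otimes\omega,H_\phi^n\rangle - \mathbb{E}[\langle\omega\otimes\omega,H_\phi^n\rangle]$ form a Cauchy sequence in $L^2(\Xi)$. The new hypothesis here is $\int_{\mathbb{T}^2}H_\phi^n(x,x)\d x\to 0$. By part (i) of the first corollary, $\mathbb{E}[\langle\omega\otimes\omega,H_\phi^n\rangle]=\int_{\mathbb{T}^2}H_\phi^n(x,x)\d x$, so this hypothesis says precisely that the expectations vanish in the limit. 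Combining these two facts, $\langle\omega\otimes\omega,H_\phi^n\rangle$ itself (not merely its centering) is Cauchy in $L^2(\Xi)$, and we define $\langle\omega\otimes\omega,H_\phi\rangle$ as its limit.

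Next I would verify that the limit is independent of the approximating sequence. Given two sequences $(H_\phi^n)$ and $(\widetilde H_\phi^n)$ satisfying all the hypotheses, interleave them into a single sequence. The interleaved sequence still converges to $H_\phi$ in $L^2$, is symmetric, and has diagonal integrals tending to $0$, so by the argument just given it is Cauchy in $L^2(\Xi)$; hence its two subsequential limits must coincide. This shows the limit depends only on $H_\phi$.

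Finally I would compute the second moment. For each $n$, apply part (ii) of the first corollary to the centered variable to get
\begin{align*}
\mathbb{E}\left[\left|\langle\omega\otimes\omega,H_\phi^n\rangle - \mathbb{E}[\langle\omega\otimes\omega,H_\phi^n\rangle]\right|^2\right] = \|H_\phi^n\|_{L^2(\mathbb{T}^2\times\mathbb{T}^2)}^2.
\end{align*}
Since the expectations $\mathbb{E}[\langle\omega\otimes\omega,H_\phi^n\rangle]\to 0$ and $\langle\omega\otimes\omega,H_\phi^n\rangle\to\langle\omega\otimes\omega,H_\phi\rangle$ in $L^2(\Xi)$, the left-hand side converges to $\mathbb{E}[|\langle\omega\otimes\omega,H_\phi\rangle|^2]$. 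On the right-hand side, $\|H_\phi^n\|_{L^2}\to\|H_\phi\|_{L^2}$ because $H_\phi^n\to H_\phi$ in $L^2$. Passing to the limit yields $\mathbb{E}[|\langle\omega\otimes\omega,H_\phi\rangle|^2]=\|H_\phi\|_{L^2(\mathbb{T}^2\times\mathbb{T}^2)}^2$, which is the claimed identity (modulo the square on the norm, which I read as a typo in the statement).

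The only genuinely nontrivial point is ensuring that such approximating sequences exist at all, i.e.\ that one can simultaneously arrange $L^2$-convergence, symmetry, $H^{2+}$-regularity, and vanishing diagonal integrals. The main obstacle is thus the construction of $(H_\phi^n)_n$ rather than the passage to the limit: one needs a mollification or spectral truncation of $H_\phi$ that preserves symmetry and whose diagonal contribution can be controlled, using that $H_\phi(x,x)=0$ on the diagonal so that the genuine diagonal integral of $H_\phi$ is zero and only the approximation error must be driven down. Everything else is a routine combination of the two prior corollaries with standard $L^2$ limit arguments.
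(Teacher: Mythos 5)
Your proof is correct and takes essentially the same route as the paper: the preceding corollary gives Cauchyness of the centered variables, the hypothesis $\int_{\mathbb{T}^2}H_\phi^n(x,x)\,\mathrm{d}x\to 0$ kills the expectations, and the second moment follows by passing to the limit in $\mathbb{E}\left[\left|\langle\omega\otimes\omega,H_\phi^n\rangle-\mathbb{E}[\langle\omega\otimes\omega,H_\phi^n\rangle]\right|^2\right]=\|H_\phi^n\|_{L^2}^2$ using $\|H_\phi^n\|_{L^2}\to\|H_\phi\|_{L^2}$. Your interleaving argument for independence of the approximating sequence and your reading of the stated identity as missing a square on $\|H_\phi\|_{L^2}$ are both correct refinements the paper leaves implicit; note only that the existence of such sequences is not part of this corollary's claim --- the paper supplies one in the subsequent remark via a smooth cutoff of the diagonal, exactly as you anticipate.
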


\begin{proof}
By the above corollary, we have that $\left<  \omega\otimes\omega, H_{\phi}^{n}
\right>  -\mathbb{E}[\left<  \omega\otimes\omega, H_{\phi}^{n}\right>  ]$
converges in $L^{2}(\Xi)$, and by our assumption, $\mathbb{E}[\omega
\otimes\omega, H_{\phi}^{n}]= \int_{\mathbb{T}^{2}} H_{\phi}^{n}(x,x) \d x
\to0 \quad(n\to\infty)$, so $\left<  \omega\otimes\omega, H_{\phi}^{n}
\right>  $ converges in $L^{2}(\Xi)$. For the estimate, note that
\begin{align*}
\mathbb{E}[|\left<  \omega\otimes\omega, H_{\phi}\right>  |^{2}]  &
=\lim_{n\to\infty}\mathbb{E}[|\left<  \omega\otimes\omega,H^{n}_{\phi}\right>
|^{2}]\\
&  =\lim_{n\to\infty} \mathbb{E}[|\left<  \omega\otimes\omega, H^{n}_{\phi
}\right>  - \mathbb{E}[\left<  \omega\otimes\omega, H_{\phi}^{n}\right>
]|^{2}]\\
&  =\lim_{n\to\infty} \|H_{\phi}^{n}\|^{2}_{L^{2}(\mathbb{T}^{2}%
\times\mathbb{T}^{2})} =\|H_{\phi}\|^{2}_{L^{2}(\mathbb{T}^{2}\times
\mathbb{T}^{2})}.
\end{align*}

\end{proof}

\begin{remark}
Constructing such a sequence is straight forward by cutting out the diagonal
with some smooth cut-off function: Let $\varphi$ be smooth, symmetric, with
$\mathrm{supp}\; \varphi\in B(0,1)$. $0\leq\varphi\leq1$, $\varphi\equiv1$ in
$B(0,1/2)$. We define $\varphi_{n}:=\varphi(n x)$ and set
\begin{align*}
H_{\phi}^{n}(x,y):=(1-\varphi_{n}(x-y))H_{\phi}(x,y).
\end{align*}
By definition we have $H_{\phi}^{n}(x,x)=0$ and
\begin{align*}
\|H_{\phi}-H_{\phi}^{n}\|_{L^{2}(\mathbb{T}^{2}\times\mathbb{T}^{2})}^{2}  &
=\int_{\mathbb{T}^{2}}\int_{\mathbb{T}^{2}}|H_{\phi}(x,y)|^{2}\varphi_{n}%
^{2}(x-y)\d x\d y \leq\int_{\mathbb{T}^{2}} \int\limits_{|x-y|\leq\frac1n}
|H_{\phi}(x,y)|^{2} \d x \d y\\
&  \leq\int_{\mathbb{T}^{2}}\int\limits_{|x-y|\leq\frac1n} |x-y|^{-2+2\epsilon
} \d x \d y\\
&  \leq c \frac{1}{n^{2\epsilon}}%
\end{align*}
so $H_{\phi}^{n}$ has the desired properties to apply the previous corollary.

For $\epsilon=0$, we have that $H_{\phi}\notin L^{2}(\mathbb{T}^{2}%
\times\mathbb{T}^{2})$ but only $H_{\phi}\in L^{p}(\mathbb{T}^{2}%
\times\mathbb{T}^{2})$ for $1\leq p<2$, and thus, the construction here does not
work ($\|H_{\phi}-H_{\phi}^{n} \|_{L^{2}(\mathbb{T}^{2}\times\mathbb{T}^{2})}$
diverges logarithmically).
\end{remark}

The above results also hold when we have an additional time dependence, and
thus, they give rise to a good notion of a weak solution.

\begin{theorem}
Let $\theta:\Xi\times[0,T]\to C^{\infty}(\mathbb{T}^{2})^{\prime}$ be
measurable with $\theta(\xi)\in C([0,T];H^{-1-}(\mathbb{T}^{2}))$. Assume that
$\theta_{t}$ is a white noise for all $t\in[0,T]$. For $\phi\in C^{\infty
}(\mathbb{T}^{2})$ let $H^{n}_{\phi}$ be an approximation of $H_{\phi}$ as in
Corollary \ref{cor:defhphi}. Then, $\left<  \theta_{\cdot} \otimes\theta
_{\cdot}, H_{\phi}^{n} \right>  $ is a Cauchy sequence in $L^{2}(\Xi;
L^{2}(0,T))$ and we denote the limit by $\left<  \theta_{\cdot} \otimes
\theta_{\cdot}, H_{\phi}\right>  $.
\end{theorem}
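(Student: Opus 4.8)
The plan is to reduce the time-dependent statement to the fixed-time estimate of Corollary \ref{cor:defhphi}, exploiting that every marginal $\theta_{t}$ carries the white noise law. Write $X_{t}^{n}:=\langle\theta_{t}\otimes\theta_{t},H_{\phi}^{n}\rangle$. For each fixed $t$ this pairing is well defined: since $\theta_{t}\in H^{-1-}(\mathbb{T}^{2})$ we have $\theta_{t}\otimes\theta_{t}\in H^{-2-}(\mathbb{T}^{2}\times\mathbb{T}^{2})$, and as $H_{\phi}^{n}\in H^{2+}(\mathbb{T}^{2}\times\mathbb{T}^{2})$ one can choose its regularity index strictly above the corresponding negative index, making the duality legitimate. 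The goal is then to show $\|X^{n}-X^{m}\|_{L^{2}(\Xi;L^{2}(0,T))}\to 0$ as $n,m\to\infty$, after which completeness of $L^{2}(\Xi;L^{2}(0,T))$ provides the limit.

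First I would establish joint measurability of $(t,\xi)\mapsto X_{t}^{n}(\xi)$, so that the squared $L^{2}(\Xi;L^{2}(0,T))$-norm may be rewritten, via Tonelli, as $\int_{0}^{T}\mathbb{E}[|X_{t}^{n}-X_{t}^{m}|^{2}]\d t$. For fixed $\xi$ the path $t\mapsto\theta_{t}(\xi)$ is continuous into $H^{-1-}$, whence $t\mapsto\theta_{t}\otimes\theta_{t}$ is continuous into $H^{-2-}$ and $t\mapsto X_{t}^{n}(\xi)$ is continuous; for fixed $t$ the map $\xi\mapsto X_{t}^{n}(\xi)$ is $\mathcal{F}$-measurable by measurability of $\theta$. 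A map that is continuous in $t$ and measurable in $\xi$ is product-measurable, and since the integrand is nonnegative Tonelli licenses the interchange of expectation and time integral.

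The heart of the argument is then the fixed-time computation. With $g:=H_{\phi}^{n}-H_{\phi}^{m}$, which is symmetric and of class $H^{2+}$, parts i) and ii) of the corollary applied to the white noise $\theta_{t}$ give
\begin{align*}
\mathbb{E}\big[|\langle\theta_{t}\otimes\theta_{t},g\rangle|^{2}\big]=\|g\|_{L^{2}(\mathbb{T}^{2}\times\mathbb{T}^{2})}^{2}+\Big(\int_{\mathbb{T}^{2}}g(x,x)\d x\Big)^{2},
\end{align*}
and crucially this is independent of $t$ because all marginals share the white noise law. Integrating the constant over $[0,T]$ collapses the time integral to a factor $T$, so $\|X^{n}-X^{m}\|_{L^{2}(\Xi;L^{2}(0,T))}^{2}$ equals $T$ times the displayed expression. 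Both contributions vanish as $n,m\to\infty$: the $L^{2}$-term because $(H_{\phi}^{n})$ converges, hence is Cauchy, in $L^{2}(\mathbb{T}^{2}\times\mathbb{T}^{2})$, and the diagonal term because $\int_{\mathbb{T}^{2}}H_{\phi}^{n}(x,x)\d x\to 0$ by the defining property of the approximating sequence. This yields the Cauchy property, and the same identity (applied to the difference of two approximating sequences) shows the limit is independent of the chosen approximation.

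I expect the only genuinely technical point to be the joint-measurability step needed to justify Tonelli; once it is in place, stationarity of the marginals trivializes the time integration and the estimate is an immediate consequence of Corollary \ref{cor:defhphi}. Everything else is a direct transcription of the fixed-time arguments to the $L^{2}(0,T)$ setting.
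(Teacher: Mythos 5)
Your proof is correct and takes essentially the same approach the paper intends: the paper states this theorem without a written proof, presenting it as the immediate time-dependent extension of Corollary \ref{cor:defhphi}, which is exactly your slicewise argument --- at each fixed $t$ the second moment of $\left\langle \theta_{t}\otimes\theta_{t}, H_{\phi}^{n}-H_{\phi}^{m}\right\rangle$ equals $\|H_{\phi}^{n}-H_{\phi}^{m}\|_{L^{2}(\mathbb{T}^{2}\times\mathbb{T}^{2})}^{2}+\left(\int_{\mathbb{T}^{2}}(H_{\phi}^{n}-H_{\phi}^{m})(x,x)\d x\right)^{2}$, is constant in $t$ since every marginal is a white noise, and Tonelli reduces the $L^{2}(\Xi;L^{2}(0,T))$ bound to $T$ times this quantity. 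The joint-measurability justification and the independence of the limit from the approximating sequence are exactly the details the paper leaves implicit, and you handle both correctly.
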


We now have everything in place to define what we mean by a white noise
solution of mSQG.

\begin{definition}
\label{def:wnsolution} Let $\theta:\Xi\times[0,T]\to C^{\infty}(\mathbb{T}%
^{2})^{\prime}$ be measurable with $\theta(\xi)\in C([0,T];H^{-1-}%
(\mathbb{T}^{2}))$ for almost all $\xi\in\Xi$. We call $\theta$ a white noise solution
of mSQG if $\theta_{t}$ is a white noise for all $t\in[0,T]$ and
\begin{align*}
\left<  \theta_{t},\phi\right>  = \left<  \theta_{0},\phi\right>  +\int%
_{0}^{t} \left<  \theta_{s} \otimes\theta_{s}, H_{\phi} \right>  \;
\mathrm{d}s
\end{align*}
$P$-a.s. for every $\phi\in C^{\infty}(\mathbb{T}^{2})$.
\end{definition}

\section{Random point vortex dynamics}

The first step to construct a white noise solution is to consider a finite
number of particles whose interaction is described by the kernel $K$. We
define
\begin{align*}
\Delta_{N}:= \left\{  (x_{1},\dots,x_{N})\in(\mathbb{T}^{2})^{N}:x_{i}=x_{j}
\text{ for some } i \neq j, \; i,j=1,\dots,N \right\}  .
\end{align*}
The dynamics of point vortices are given by the finite dimensional system
\begin{align}
\label{eq:odevortexdynamics}\frac{dx_{i}\left(  t\right)  }{dt}=\frac{1}%
{\sqrt{N}}\sum_{j=1}^{N}\xi_{j} K\left(  x_{i}\left(  t\right)  -x_{j}\left(
t\right)  \right)  ,\qquad i=1,\dots,N
\end{align}
where $\xi_{j}\in\mathbb{R}$ is the intensity of the vortex $x_{j}\left(
t\right)  $ and the initial value $(x_{1}(0),\dots,x_{N}(0))\in\Delta_{N}^{c}$
is given. Due to $K(0)=0$ the term with $i=j$ in the sum vanishes. First we
show that for a.e. $(x_{1}(0),\dots,x_{N}(0))\in\Delta_{N}^{c}$ the
point vortex dynamics are globally well-defined, where we consider the Lebesgue 
measure on the torus. This is obviously the case if there is no collision, i.e., 
if $x(t):=(x_{1}(t),\dots,x_{N}(t)\in\Delta_{N}^{c})$ for all $t$.

\subsection{No collision for a.e. initial condition}

We present a variant of a famous result of solvability for a.e. initial
conditions, taken from \cite{DurrPulv} (on the torus) and \cite{MarchPulv
book}, Chapter 4 (in full space). We work on the unitary torus $\mathbb{T}%
^{2}=\mathbb{R}^{2}/\mathbb{Z}^{2}$, as in \cite{DurrPulv}; here Lebesgue
measure is a probability, and particle displacement is controlled a priori by
the compactness of the set. Recall that the singular part of interaction
kernel $K\left(  x-y\right)  $ on $\mathbb{T}^{2}$ is locally of the form%
\[
K\left(  x-y\right)  \sim\frac{\left(  x-y\right)  ^{\perp}}{\left\vert
x-y\right\vert ^{3-\epsilon}}%
\]
for some $\epsilon>0$. The function $K$ is given by the orthogonal gradient of
a certain Green-type function $G$:%
\[
K\left(  x\right)  =\nabla^{\perp}G\left(  x\right)
\]
and the singular part of $G\left(  x\right)  $ is locally of the form (up to
multiplicative constants)
\[
G\left(  x-y\right)  \sim\frac{1}{\left\vert x-y\right\vert ^{1-\epsilon}}.
\]
We introduce the (Lyapunov)\ function%
\[
L\left(  x_{1},\dots,x_{N}\right)  =-\sum_{\substack{i,j=1,\dots,N\\i\neq
j}}G\left(  x_{i}-x_{j}\right)  .
\]
By a simple argument, if we prove for a given an initial condition
\[
L\left(  x_{1}\left(  t\right)  ,\dots,x_{N}\left(  t\right)  \right)  \leq C
\]
then particles do not collapse and the dynamics is globally defined for that
initial condition, because $G$ is strictly negative in a neighborhood of the 
singularity.

With $L\left(  t\right)  =L\left(  x_{1}\left(  t\right)  ,\dots,x_{N}\left(
t\right)  \right)  $, we have%
\begin{align*}
\frac{dL\left(  t\right)  }{dt}  &  =-\sum_{\substack{i,j=1,\dots,N\\i\neq
j}}\nabla G\left(  x_{i}\left(  t\right)  -x_{j}\left(  t\right)  \right)
\left(  \frac{dx_{i}\left(  t\right)  }{dt}-\frac{dx_{j}\left(  t\right)
}{dt}\right) \\
&  =-\sum_{\substack{i,j=1,\dots,N\\i\neq j}}\nabla G\left(  x_{i}\left(
t\right)  -x_{j}\left(  t\right)  \right) \\
&  \qquad\qquad\qquad\cdot\frac{1}{\sqrt{N}}\left(\sum_{k\neq i}\xi_{k}\nabla
^{\perp}G\left( x_{i}\left( t\right)-x_{k}\left( t\right)\right)-\sum_{k\neq
j}\xi_{k}\nabla^{\perp}G\left(  x_{j}\left(  t\right)  -x_{k}\left(  t\right)
\right)  \right)  .
\end{align*}
Here we see an important cancellation (its importance will be appreciated
below): the term in the sum $\sum_{k\neq i}$ with $k=j$ and the term in the
sum $\sum_{k\neq j}$ with $k=i$ do not contribute, because%
\[
\nabla G\left(  x_{i}\left(  t\right)  -x_{j}\left(  t\right)  \right)
\cdot\nabla^{\perp}G\left(  x_{i}\left(  t\right)  -x_{j}\left(  t\right)
\right)  =0.
\]
These are the most singular terms, since for small $\left\vert x_{i}\left(
t\right)  -x_{j}\left(  t\right)  \right\vert $ they behave like
\[
\frac{1}{\left\vert x_{i}\left(  t\right)  -x_{j}\left(  t\right)  \right\vert
^{4-2\epsilon}}.
\]
The other terms behave like%
\[
\frac{1}{\left\vert x_{i}\left(  t\right)  -x_{j}\left(  t\right)  \right\vert
^{2-\epsilon}}\frac{1}{\left\vert x_{i}\left(  t\right)  -x_{k}\left(
t\right)  \right\vert ^{2-\epsilon}}%
\]
with $j\neq k$; hence, they are less singular when two particles approach each 
other.

In order to make progresses, we now need to consider the flow map
$x^{0}\mapsto x\left(  t|x^{0}\right)  $. This is locally defined, when
$x^{0}\in\Delta_{N}^{c}$. However, the time before collision depends on
$x^{0}$ and complicate matters. To avoid these troubles, we mollify $G$ in
such a way that we have global solutions for all $x^{0}$, a smooth flow, but
also equal to the original solutions if particles are not too close each other.

For every $\delta\in\left(  0,1\right)  $, denote by $G^{\left(
\delta\right)  }\left(  x\right)  $ a smooth periodic function such that
\begin{align*}
G^{\left(  \delta\right)  }\left(  x\right)   &  =G\left(  x\right)  \text{
for }\left\vert x\right\vert \geq\delta\\
\left\vert G^{\left(  \delta\right)  }\left(  x\right)  \right\vert  &
\leq\frac{C}{\left\vert x-y\right\vert ^{1-\epsilon}}\text{ for }\left\vert
x\right\vert >0\\
\left\vert \nabla G^{\left(  \delta\right)  }\left(  x\right)  \right\vert  &
\leq\frac{C}{\left\vert x-y\right\vert ^{2-\epsilon}}\text{ for }\left\vert
x\right\vert >0.
\end{align*}
Denote by $\left(  x_{1}^{\left(  \delta\right)  }\left(  t\right)
,\dots,x_{N}^{\left(  \delta\right)  }\left(  t\right)  \right)  $ the unique
solution of
\[
\frac{dx_{i}^{\left(\delta\right)}\left(t\right)}{dt}=\frac{1}{\sqrt{N}}\sum_
{j\neq i}\xi_{j}K^{\left(  \delta\right) }\left(  x_{i}^{\left(  \delta\right)
}\left(  t\right)  -x_{j}^{\left(  \delta\right)  }\left(  t\right)  \right)
,\qquad i=1,\dots,N
\]
with given (arbitrary)\ initial condition.

\begin{lemma}
Consider the smooth map $x^{0}\mapsto x_{i}^{\left(  \delta\right)  }\left(
t|x^{0}\right)  $ in $\left(  \mathbb{T}^{2}\right)  ^{N}$. Then, the 
probability product measure $Leb_{2N}$ on $\left(  \mathbb{T}^{2}\right)^{N}$ 
is invariant for this map.
\end{lemma}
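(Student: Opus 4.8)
The plan is to show that the smooth vector field on $(\mathbb{T}^2)^N$ generating the mollified dynamics is divergence-free, and then to invoke Liouville's theorem, which states that the flow of a divergence-free field preserves the standard volume form (here $Leb_{2N}$). Write the field as $V=(V_1,\dots,V_N)$ with
\[
V_i(x_1,\dots,x_N)=\frac{1}{\sqrt N}\sum_{j\neq i}\xi_j\, K^{(\delta)}(x_i-x_j),
\]
and recall $K^{(\delta)}=\nabla^{\perp}G^{(\delta)}$ for the smooth periodic function $G^{(\delta)}$. Since $G^{(\delta)}$ is smooth and periodic, $V$ is a smooth, bounded vector field on the compact manifold $(\mathbb{T}^2)^N$, so its flow is complete and $x^0\mapsto x^{(\delta)}(t\,|\,x^0)$ is a diffeomorphism for every $t$.

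The core computation is $\operatorname{div}V\equiv 0$. The divergence of $V$ on $(\mathbb{T}^2)^N$ is the trace of the full Jacobian, i.e.\ $\operatorname{div}V=\sum_{i=1}^N \nabla_{x_i}\cdot V_i$, so only the diagonal blocks contribute and the dependence of $V_i$ on $x_j$ for $j\neq i$ is irrelevant. For each $i$,
\[
\nabla_{x_i}\cdot V_i=\frac{1}{\sqrt N}\sum_{j\neq i}\xi_j\,\bigl(\nabla\cdot\nabla^{\perp}G^{(\delta)}\bigr)(x_i-x_j)=0,
\]
because the divergence of an orthogonal gradient vanishes identically: for any scalar $f$ one has $\nabla\cdot\nabla^{\perp}f=-\partial_x\partial_y f+\partial_y\partial_x f=0$. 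Summing over $i$ gives $\operatorname{div}V\equiv 0$ on all of $(\mathbb{T}^2)^N$.

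To conclude I would use the Jacobian form of Liouville's theorem. Let $J(t,x^0)=\det D_{x^0}\,x^{(\delta)}(t\,|\,x^0)$ be the Jacobian determinant of the flow map. Along trajectories it solves the linear ODE $\tfrac{d}{dt}J=(\operatorname{div}V)\bigl(x^{(\delta)}(t\,|\,x^0)\bigr)\,J$ with $J(0,x^0)=1$, so $\operatorname{div}V\equiv 0$ forces $J(t,\cdot)\equiv 1$. By the change-of-variables formula this says precisely that $x^0\mapsto x^{(\delta)}(t\,|\,x^0)$ preserves $Leb_{2N}$; as $Leb_{2N}$ is a probability on the unit torus, this is the asserted invariance.

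There is no serious obstacle here: the result is structural, resting entirely on the Hamiltonian form $K^{(\delta)}=\nabla^{\perp}G^{(\delta)}$ of the interaction. The only points requiring care are that the mollification was introduced precisely to make $V$ smooth and bounded, hence the flow complete (without it the flow is only locally defined and the argument would break down before a collision), and that the divergence must be taken with respect to the correct variable, so that the off-diagonal Jacobian entries, though nonzero, never enter the trace.
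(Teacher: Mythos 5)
Your proposal is correct and follows essentially the same route as the paper: both arguments reduce the claim to Liouville's theorem by showing the generating field is divergence-free, which in turn rests on $\nabla\cdot\nabla^{\perp}G^{(\delta)}=0$ via equality of mixed partials. You simply spell out details the paper leaves implicit (completeness of the mollified flow, the fact that only the diagonal Jacobian blocks enter the trace, and the ODE for the Jacobian determinant), so there is nothing to correct.
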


\begin{proof}
It is a known fact for smooth flows that the determinant is given by the
exponential of the divergence of the vector field, which here is zero; hence,
the determinant is identically equal to one and the flow is Lebesgue measure
preserving. Let us only check that the divergence is zero:\ it is the sum of
divergences on each component $\mathbb{T}^{2}$ of $\left(  \mathbb{T}^{2}
\right)^{N}$, which are all equal to zero because the components have the 
form $\nabla^{\perp}\varphi^{\left(\delta\right)  }\left(  x\right)  $ 
(apply Schwarz theorem on mixed second derivatives).
\end{proof}

Similarly to above, let us introduce the function
\[
L^{\left(  \delta\right)  }\left(  x_{1},\dots,x_{N}\right)  =-\sum
_{\substack{i,j=1,\dots,N \\i\neq j}}\left(  G^{\left(  \delta\right)  }\left(
x_{i}-x_{j}\right)  -k\right)
\]
where $k$ is such that $-\left(  G^{\left(  \delta\right)  }\left(  x\right)
-k\right)  \geq0$ for all $x\in(\mathbb{T}^{2})^{N}$.\newline Setting
$L^{\left(  \delta\right)  }\left(  t\right)  =L^{\left(  \delta\right)
}\left(  x_{1}^{\left(  \delta\right)  }\left(  t\right)  ,\dots,x_{N}^{\left(
\delta\right)  }\left(  t\right)  \right)  $, we have%
\begin{align*}
\frac{dL^{\left(  \delta\right)  }\left(  t\right)  }{dt}  &  =-\sum
_{\substack{i,j=1,\dots,N \\i\neq j}}\nabla G^{\left(  \delta\right)  }\left(
x_{i}^{\left(  \delta\right)  }\left(  t\right)  -x_{j}^{\left(
\delta\right)  }\left(  t\right)  \right)  \cdot\\
&\cdot\frac{1}{\sqrt{N}}\left(\sum_{k\neq i}\xi_{k}\nabla^{\perp}G^{\left(\delta
\right)}\left(  x_{i}^{\left(  \delta\right)  }\left(  t\right)  -x_{k}^{\left(
\delta\right)  }\left(  t\right)  \right)  -\sum_{k\neq j}\xi_{k}\nabla
^{\perp}G^{\left(  \delta\right)  }\left(  x_{j}^{\left(  \delta\right)
}\left(  t\right)  -x_{k}^{\left(  \delta\right)  }\left(  t\right)  \right)
\right)  .
\end{align*}
Again the terms in the last two sums of the form $\nabla^{\perp}G^{\left(
\delta\right)  }\left(  x_{i}^{\left(  \delta\right)  }\left(  t\right)
-x_{j}^{\left(  \delta\right)  }\left(  t\right)  \right)  $ cancel with
$\nabla G^{\left(  \delta\right)  }\left(  x_{i}^{\left(  \delta\right)
}\left(  t\right)  -x_{j}^{\left(  \delta\right)  }\left(  t\right)  \right)
$. Using this estimate and the invariance of Lebesgue measure we can prove:

\begin{lemma}
There exists a constant $C>0$ such that, for all $\delta\in\left(  0,1\right)
$,
\[
-\int_{\left(  \mathbb{T}^{2}\right)  ^{N}}\sup_{t\in\left[  0,T\right]  }%
\sum_{\substack{i,j=1,\dots,N \\i\neq j}}\left(  G^{\left(  \delta\right)
}\left(  x_{i}^{\left(  \delta\right)  }\left(  t|x^{0}\right)  -x_{j}%
^{\left(  \delta\right)  }\left(  t|x^{0}\right)  \right)  -k\right)
dx^{0}\leq C.
\]

\end{lemma}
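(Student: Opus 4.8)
The plan is to dominate the supremum in time by the initial value of the Lyapunov function plus the total variation of its time derivative, and then to integrate over initial data, using the Lebesgue invariance of the flow from the previous lemma to trade the time-supremum for a static phase-space integral. Since $L^{(\delta)}(t)\geq 0$ by the choice of $k$, I would start from the elementary bound
\[
\sup_{t\in[0,T]} L^{(\delta)}(t|x^0)\ \leq\ L^{(\delta)}(0|x^0)+\int_0^T\left|\frac{dL^{(\delta)}(s)}{ds}\right|\d s ,
\]
integrate both sides in $x^0$ over $(\mathbb{T}^2)^N$, and estimate the two resulting terms separately.

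For the initial term, $\int_{(\mathbb{T}^2)^N}L^{(\delta)}(0|x^0)\,dx^0=\sum_{i\neq j}\int_{(\mathbb{T}^2)^N}\bigl(k-G^{(\delta)}(x_i^0-x_j^0)\bigr)\,dx^0$. Integrating out the $N-2$ irrelevant variables (each contributing a factor $1$, as Lebesgue measure is a probability on $\mathbb{T}^2$) reduces every summand to $\int_{\mathbb{T}^2}\bigl(k-G^{(\delta)}(z)\bigr)\,dz$, which is finite and bounded uniformly in $\delta$ because $|G^{(\delta)}(z)|\leq C|z|^{-(1-\epsilon)}$ with the exponent $1-\epsilon<2$ integrable in two dimensions. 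This yields a bound of order $N(N-1)$, independent of $\delta$.

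The heart of the argument is the derivative term. Reading off the computation preceding the lemma, I would express $\frac{dL^{(\delta)}(s)}{ds}$ as the value along the flow of a fixed phase-space function, $\frac{dL^{(\delta)}(s)}{ds}=F^{(\delta)}\bigl(x^{(\delta)}(s|x^0)\bigr)$. By Fubini (the integrand is nonnegative) and the measure invariance of $x^0\mapsto x^{(\delta)}(s|x^0)$ for each fixed $s$, the double integral collapses:
\[
\int_{(\mathbb{T}^2)^N}\!\int_0^T\bigl|F^{(\delta)}(x^{(\delta)}(s|x^0))\bigr|\,\d s\,dx^0=\int_0^T\!\int_{(\mathbb{T}^2)^N}\bigl|F^{(\delta)}(y)\bigr|\,dy\,\d s=T\int_{(\mathbb{T}^2)^N}\bigl|F^{(\delta)}(y)\bigr|\,dy .
\]
It then remains to bound $\int|F^{(\delta)}|$ uniformly in $\delta$, and here the cancellation highlighted in the text is decisive. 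After the diagonal terms in which $\nabla G^{(\delta)}(x_i-x_j)\cdot\nabla^{\perp}G^{(\delta)}(x_i-x_j)=0$ are dropped, every surviving summand is, up to the factor $|\xi_k|/\sqrt{N}$, dominated by $C\,|x_i-x_j|^{-(2-\epsilon)}|x_i-x_k|^{-(2-\epsilon)}$ or its analogue with $x_j-x_k$, where $i,j,k$ are distinct. Since the two singular factors sit on distinct pairs sharing a single vertex, fixing that vertex decouples the integral into two copies of $\int_{\mathbb{T}^2}|z|^{-(2-\epsilon)}\,dz$, each finite because $2-\epsilon<2$, while the other $N-3$ variables integrate to $1$. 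Summing the $O(N^3)$ terms gives a bound on $\int|F^{(\delta)}|$ depending on $N$, $T$ and the intensities but uniform in $\delta$.

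The main obstacle, and the reason the scheme succeeds, is exactly this cancellation: without it the most singular contributions behave like $|x_i-x_j|^{-(4-2\epsilon)}$, whose exponent $4-2\epsilon$ exceeds $2$ for every $\epsilon\in(0,1)$, so the phase-space integral of $F^{(\delta)}$ would diverge. The identity $\nabla G\cdot\nabla^{\perp}G=0$ removes precisely these terms and leaves products of two separately integrable singularities located at different particle pairs, which is what makes the $\delta$-uniform estimate possible. The only further point to watch is that $k$ be chosen independently of $\delta$, which is legitimate since $\sup_x G^{(\delta)}(x)$ is bounded uniformly in $\delta$, the supremum being attained away from the origin where $G^{(\delta)}=G$.
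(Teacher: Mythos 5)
Your proposal is correct and follows essentially the same route as the paper's proof: you bound $\sup_{t\in[0,T]}L^{(\delta)}(t)$ by $L^{(\delta)}(0)$ plus the time integral of $\bigl|\frac{dL^{(\delta)}}{ds}\bigr|$, use the Lebesgue invariance of the regularized flow (via Fubini--Tonelli) to replace the integral along trajectories by a static phase-space integral, and exploit the cancellation $\nabla G^{(\delta)}\cdot\nabla^{\perp}G^{(\delta)}=0$ so that the surviving triple terms $\nabla G^{(\delta)}(x_i-x_j)\cdot\nabla^{\perp}G^{(\delta)}(x_i-x_k)$ with $i,j,k$ distinct factor into two separately integrable singularities of order $2-\epsilon<2$, exactly as in the paper. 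Your side remarks, that $k$ can be chosen uniformly in $\delta$ and that the constant depends on $N$, $T$ and the intensities but not on $\delta$, are refinements the paper leaves implicit.
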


\begin{proof}
We may summarize the last identity above in the form%
\begin{align*}
\frac{dL^{\left(\delta\right) }\left(  t\right) }{dt} &=\frac{1}{\sqrt{N}}\sum
_{\substack{i,j,k=1,\dots,N\\i\neq j,i\neq k,j\neq k}}a_{ijk}J_{i,j,k}^{\left(
\delta\right)  }\left(  x^{\left(  \delta\right)  }\left(  t\right)  \right)
\\
J_{i,j,k}^{\left(  \delta\right)  }\left(  x\right)   &  :=\nabla G^{\left(
\delta\right)  }\left(  x_{i}-x_{j}\right)  \cdot\nabla^{\perp}G^{\left(
\delta\right)  }\left(  x_{i}-x_{k}\right)
\end{align*}
for suitable coefficients $a_{ijk}$. Integrating in time, taking the supremum
in time over $\left[  0,T\right]  $ and then integrating with respect to the
initial condition $x^{0}$, we have%
\begin{align*}
\int_{\left(  \mathbb{T}^{2}\right)  ^{N}}\sup_{t\in\left[  0,T\right]
}L^{\left(  \delta\right)  }\left(  t\right)  dx^{0}\leq &  \int_{\left(
\mathbb{T}^{2}\right)  ^{N}}\left\vert L^{\left(  \delta\right)  }\left(
0\right)  \right\vert dx^{0}\\
& +\frac{C}{\sqrt{N}}\sum_{\substack{i,j,k=1,\dots,N\\i\neq j,i\neq k,j\neq k}}
\int_{0}^{T}\left(  \int_{\left(  \mathbb{T}^{2}\right)  ^{N}}\left\vert
J_{i,j,k}^{\left(  \delta\right)  }\left(  x^{\left(  \delta\right)  }\left(
s|x^{0}\right)  \right)  \right\vert dx^{0}\right)  ds.
\end{align*}
Now we use the most essential ingredient: the invariance of Lebesgue measure
under the map $x^{0}\mapsto x^{\left(  \delta\right)  }\left(  s|x^{0}\right)
$. This gives us%
\[
\int_{\left(  \mathbb{T}^{2}\right)  ^{N}}\left\vert J_{i,j,k}^{\left(
\delta\right)  }\left(  x^{\left(  \delta\right)  }\left(  s|x^{0}\right)
\right)  \right\vert dx^{0}=\int_{\left(  \mathbb{T}^{2}\right)  ^{N}%
}\left\vert J_{i,j,k}^{\left(  \delta\right)  }\left(  x^{0}\right)
\right\vert dx^{0}.
\]
From the properties imposed on $G^{\left(  \delta\right)  }$ we have%
\[
\left\vert J_{i,j,k}^{\left(  \delta\right)  }\left(  x^{0}\right)  \right\vert
\leq\frac{C}{\left\vert x_{i}-x_{j}\right\vert ^{2-\epsilon}\left\vert
x_{i}-x_{k}\right\vert ^{2-\epsilon}}%
\]
for some constant $C>0$; hence,%
\[
\int_{\left(  \mathbb{T}^{2}\right)  ^{N}}\left\vert J_{i,j,k}^{\left(
\delta\right)  }\left(  x^{0}\right)  \right\vert dx^{0}\leq C\int_{\left(
\mathbb{T}^{2}\right)  ^{2}}\frac{1}{\left\vert x_{i}-x_{k}\right\vert
^{2-\epsilon}}\left(  \int_{\mathbb{T}^{2}}\frac{1}{\left\vert x_{i}%
-x_{j}\right\vert ^{2-\epsilon}}dx_{j}\right)  dx_{i}dx_{k}\leq C^{\prime}%
\]
for some constant $C^{\prime}>0$. Similarly, $\int_{\left(  \mathbb{T}%
^{2}\right)  ^{N}}\left\vert L^{\left(  \delta\right)  }\left(  0\right)
\right\vert dx^{0}\leq C^{\prime\prime}$ for some constant $C^{\prime}>0$. 
In conclusion,
\[
-\int_{\left(  \mathbb{T}^{2}\right)  ^{N}}\sup_{t\in\left[  0,T\right]  }%
\sum_{\substack{i,j=1,\dots,N\\i\neq j}}\left(  G^{\left(  \delta\right)
}\left(  x_{i}^{\left(  \delta\right)  }\left(  t|x^{0}\right)  -x_{j}%
^{\left(  \delta\right)  }\left(  t|x^{0}\right)  \right)  -k\right)
dx^{0}\leq C
\]
for some constant $C>0$.
\end{proof}

\begin{remark}
Without the cancellation of the most singular terms, we would have also%
\[
\int_{\left(  \mathbb{T}^{2}\right)  ^{2}}\frac{1}{\left\vert x_{i}%
-x_{j}\right\vert ^{4-4\epsilon}}dx_{i}dx_{j}=+\infty
\]
(for $\epsilon$ small).
\end{remark}

\begin{theorem}
\label{th:nocollision} For Lebesgue a.e. initial condition $x^{0}\in\Delta
_{N}^{c}$, there is no collision and the solution is global and unique.
\end{theorem}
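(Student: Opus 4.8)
The plan is to transfer the uniform-in-$\delta$ bound on the mollified Lyapunov function into a statement about the genuine (unmollified) dynamics, exploiting that the two flows coincide until two particles come within distance $\delta$. Throughout write $r_{0}(x^{0})=\min_{i\neq j}|x_{i}^{0}-x_{j}^{0}|>0$ for $x^{0}\in\Delta_{N}^{c}$, and let $[0,T^{\ast}(x^{0}))$ be the maximal interval of existence of the true dynamics \eqref{eq:odevortexdynamics}. Since $K$ is smooth on $\Delta_{N}^{c}$ and $\mathbb{T}^{2}$ is compact, the only obstruction to prolonging the solution is a collision; hence it suffices to prove that the collision set $B:=\{x^{0}\in\Delta_{N}^{c}:T^{\ast}(x^{0})\leq T\}$ is Lebesgue-null. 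Indeed, for $x^{0}\notin B$ the vector field stays smooth along the trajectory, so standard ODE theory yields a unique solution on $[0,T]$, and letting $T$ run through the integers gives a global solution for a.e.\ $x^{0}$.

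First I would introduce, for each $\delta\in(0,1)$, the first time $\tau_{\delta}(x^{0})\in[0,\infty]$ at which some pair of the mollified particles $x^{(\delta)}(\cdot\,|x^{0})$ attains separation $\delta$. For $t<\tau_{\delta}(x^{0})$ all mollified pairwise distances exceed $\delta$, so $G^{(\delta)}=G$ and $K^{(\delta)}=K$ along the trajectory; by uniqueness for the locally Lipschitz ODE the mollified and true flows then coincide on $[0,\tau_{\delta}(x^{0}))$. Consequently, if the true dynamics collides at some time $\leq T$, then for every $\delta<r_{0}(x^{0})$ the (initially) common trajectory must cross separation $\delta$ before $T^{\ast}\leq T$, so $\tau_{\delta}(x^{0})\leq T$. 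This yields the inclusion $B_{\geq\delta}:=\{x^{0}\in B:r_{0}(x^{0})>\delta\}\subseteq\{\tau_{\delta}\leq T\}$.

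Next I would estimate $\mathrm{Leb}(\{\tau_{\delta}\leq T\})$ through the uniform bound established above, namely $\int_{(\mathbb{T}^{2})^{N}}\sup_{t\in[0,T]}L^{(\delta)}(t\,|x^{0})\,dx^{0}\leq C$ with $C$ independent of $\delta$, where $L^{(\delta)}\geq0$ pointwise. The constant $k$ may be fixed once and for all independently of $\delta$, because $G$ is bounded above on $\mathbb{T}^{2}$ (it tends to $-\infty$ only at the singularity) and $G^{(\delta)}=G$ away from the origin. On $\{\tau_{\delta}\leq T\}$ some pair sits at separation exactly $\delta$ at time $\tau_{\delta}$; since every summand $-(G^{(\delta)}-k)$ is nonnegative and $-G(z)+k\geq c\,\delta^{-(1-\epsilon)}$ whenever $|z|=\delta$ (using $G(z)\sim-|z|^{-(1-\epsilon)}$ near $0$), that single close pair forces
\[
\sup_{t\in[0,T]}L^{(\delta)}(t\,|x^{0})\;\geq\;c\,\delta^{-(1-\epsilon)}.
\]
Markov's inequality then gives $\mathrm{Leb}(\{\tau_{\delta}\leq T\})\leq C\delta^{1-\epsilon}$, whence $\mathrm{Leb}(B_{\geq\delta})\leq C\delta^{1-\epsilon}$.

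Finally, for any fixed $\delta_{0}>0$ and any $\delta<\delta_{0}$ one has $B_{\geq\delta_{0}}\subseteq B_{\geq\delta}$, so $\mathrm{Leb}(B_{\geq\delta_{0}})\leq C\delta^{1-\epsilon}$; letting $\delta\downarrow0$ forces $\mathrm{Leb}(B_{\geq\delta_{0}})=0$ for every $\delta_{0}$. Since $B_{\geq\delta_{0}}\uparrow B$ as $\delta_{0}\downarrow0$, continuity of measure gives $\mathrm{Leb}(B)=0$, completing the argument. I expect the main obstacle to be the quantitative lower bound in the third step: one must be sure that the $\delta^{-(1-\epsilon)}$ blow-up of one near-colliding pair genuinely survives in $\sup_{t}L^{(\delta)}$, which hinges on the nonnegativity of all summands (so no other pair can cancel the large contribution) together with the $\delta$-independence of both $k$ and the constant $C$ --- precisely the features secured by the cancellation of the most singular terms in the preceding lemmas.
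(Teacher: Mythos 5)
Your proposal is correct and is essentially the paper's own proof: both arguments use the uniform-in-$\delta$ Lyapunov bound together with Chebyshev/Markov to get $\mathrm{Leb}\left\{ \sup_{t\in[0,T]} L^{(\delta)} \geq c\,\delta^{-(1-\epsilon)} \right\} \leq C\delta^{1-\epsilon}$, and both exploit that the mollified and true flows coincide as long as all pairwise separations exceed $\delta$, concluding by letting $\delta\downarrow 0$. Your write-up merely makes explicit some details the paper compresses --- the stopping time $\tau_{\delta}$, the $\delta$-independence of $k$, and the monotone-limit step $B_{\geq\delta_{0}}\uparrow B$ --- without changing the underlying argument.
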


\begin{proof}
Denote by $d_{T}^{\left(  \delta\right)  }\left(  x^{0}\right)  $ the minimal
distance between vortices of the smoothed system, starting from $x^{0}$, over
$\left[  0,T\right]  $. Then%
\begin{align*}
d_{T}^{\left(  \delta\right)  }\left(  x^{0}\right)    <\delta 
\Longleftrightarrow & \exists t\in\left[  0,T\right]  ,\exists i\neq j:\left\vert
x_{i}^{\left(  \delta\right)  }\left(  t|x^{0}\right)  -x_{j}^{\left(
\delta\right)  }\left(  t|x^{0}\right)  \right\vert <\delta\\
  \Longrightarrow & -\sup_{t\in\left[  0,T\right]  }\sum
_{\substack{i,j=1,\dots,N\\i\neq j}}\left(  G^{\left(  \delta\right)  }\left(
x_{i}^{\left(  \delta\right)  }\left(  t|x^{0}\right)  -x_{j}^{\left(
\delta\right)  }\left(  t|x^{0}\right)  \right)  -k\right) \\
 &  >C\left(  1+\frac{1}{\delta^{1-\epsilon}}\right);
\end{align*}
hence%
\begin{align*}
&  Leb_{2N}\left\{  x^{0}\in\left(  \mathbb{T}^{2}\right)  ^{N}:d_{T}^{\left(
\delta\right)  }\left(  x^{0}\right)  <\delta\right\} \\
&  \leq Leb_{2N}\Bigg\{ x^{0}\in\left(  \mathbb{T}^{2}\right)  ^{N}:\\
&  \qquad\qquad\qquad-\sup_{t\in\left[  0,T\right]  } \sum
_{\substack{i,j=1,\dots,N\\i\neq j}}\left(  G^{\left(  \delta\right)  }\left(
x_{i}^{\left(  \delta\right)  }\left(  t|x^{0}\right)  -x_{j}^{\left(
\delta\right)  }\left(  t|x^{0}\right)  \right)  -k\right)  >C\left(
1+\frac{1}{\delta^{1-\epsilon}}\right)  \Bigg\}\\
&  \leq\frac{C}{C\left(  1+\frac{1}{\delta^{1-\epsilon}}\right)  }%
\end{align*}
where we have used Chebyshev inequality and the lemma and have assumed
$\delta$ so small that $C\left(  1+\frac{1}{\delta^{1-\epsilon}}\right)  >0$.
Thus, for a very large (in the sense of Lebesgue measure) set of initial
conditions $d_{T}^{\left(  \delta\right)  }\left(  x^{0}\right)  \geq\delta$,
which means that $x^{\left(  \delta\right)  }\left(  t|x^{0}\right)  =x\left(
t|x^{0}\right)  $ and no collision occurs. This property is true for a.e.
initial conditions, by the arbitrarity of $\delta$.
\end{proof}

\begin{lemma}
The map $x^{0}\mapsto x\left(  t|x^{0}\right)  $ defined a.e. by the previous
theorem is measurable and preserves Lebesgue measure.
\end{lemma}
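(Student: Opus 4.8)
The plan is to leverage the two facts already in hand: that each mollified flow $x^{0}\mapsto x^{\left(\delta\right)}\left(t|x^{0}\right)$ is smooth and preserves $Leb_{2N}$, and that by Theorem~\ref{th:nocollision} the genuine flow coincides with a mollified flow off a small exceptional set. Concretely, I would set
\[
A_{\delta}:=\left\{x^{0}\in\left(\mathbb{T}^{2}\right)^{N}:d_{T}^{\left(\delta\right)}\left(x^{0}\right)\geq\delta\right\},
\]
so that $x\left(t|x^{0}\right)=x^{\left(\delta\right)}\left(t|x^{0}\right)$ for every $x^{0}\in A_{\delta}$ and every $t\in\left[0,T\right]$, while the Chebyshev estimate in the proof of Theorem~\ref{th:nocollision} yields $Leb_{2N}\left(A_{\delta}^{c}\right)\to0$ as $\delta\to0$.

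For \textbf{measurability}, I would fix $t$, pick a sequence $\delta_{k}\downarrow0$, and put $A:=\bigcup_{k}A_{\delta_{k}}$, which has full measure since $Leb_{2N}\left(A^{c}\right)\leq Leb_{2N}\left(A_{\delta_{k}}^{c}\right)\to0$. On each $A_{\delta_{k}}$ the map $x\left(t|\cdot\right)$ agrees with the smooth, hence measurable, map $x^{\left(\delta_{k}\right)}\left(t|\cdot\right)$; as a countable union of pieces on which it coincides with a measurable function, $x\left(t|\cdot\right)$ is measurable $Leb_{2N}$-a.e.

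For \textbf{measure preservation}, I would fix $t$ and test against an arbitrary bounded measurable $f$ on $\left(\mathbb{T}^{2}\right)^{N}$ with $\left\Vert f\right\Vert_{\infty}\leq M$ (taking $f=\mathbf{1}_{B}$ then gives the pushforward identity directly). Using first the invariance of $Leb_{2N}$ under the mollified flow, and then the fact that the integrand $f\left(x\left(t|x^{0}\right)\right)-f\left(x^{\left(\delta\right)}\left(t|x^{0}\right)\right)$ vanishes on $A_{\delta}$ and is bounded by $2M$ on the full-measure part of $A_{\delta}^{c}$, one obtains
\[
\left\vert\int f\left(x\left(t|x^{0}\right)\right)dx^{0}-\int f\left(x^{0}\right)dx^{0}\right\vert\leq 2M\cdot Leb_{2N}\left(A_{\delta}^{c}\right).
\]
Letting $\delta\to0$ forces the right-hand side to $0$, giving $\int f\left(x\left(t|x^{0}\right)\right)dx^{0}=\int f\left(x^{0}\right)dx^{0}$ for all such $f$, which is exactly the assertion that $x\left(t|\cdot\right)$ pushes $Leb_{2N}$ forward to itself.

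The only delicate point is that the genuine flow is defined merely a.e. and that its identification with the mollified flow holds on a $\delta$-dependent set; the whole argument hinges on the uniform measure bound $Leb_{2N}\left(A_{\delta}^{c}\right)\to0$ supplied by the no-collision theorem, which is what allows the error terms on $A_{\delta}^{c}$—controlled only through $\left\Vert f\right\Vert_{\infty}$ and the measure of $A_{\delta}^{c}$—to vanish in the limit. No new estimate is required beyond this clean passage $\delta\to0$.
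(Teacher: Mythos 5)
Your proposal is correct and follows essentially the same route as the paper: both arguments rest on the fact that the true flow coincides with the mollified flow $x^{(\delta)}(t|\cdot)$ off a set whose measure vanishes as $\delta\to0$, combined with the Lebesgue invariance of each smooth mollified flow. The paper states this in one line, while you make the limiting step explicit via the sets $A_{\delta}$, the Chebyshev bound $Leb_{2N}(A_{\delta}^{c})\to0$, and testing against bounded measurable $f$ --- a welcome elaboration, but not a different proof.
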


\begin{proof}
The claim follows directly from the fact that $x\left(  t|x^{0}\right)
=x^{\left(  \delta\right)  }\left(  t|x^{0}\right)  $ for some sufficiently
small $\delta>0$ depending on $x^{0}$ and $t$ and that the Lebesgue measure is
invariant under the smooth map $x^{0}\mapsto x^{\left(  \delta\right)
}\left(  t|x^{0}\right)$.
\end{proof}

A direct consequence of the previous lemma is that if we consider the initial
condition as a random variable the process defined by Theorem
\ref{th:nocollision} is stationary. Altogether, we obtain the following
theorem on the existence of the vortex dynamics.

\begin{theorem}
\label{th:exvortex} For every $\left(  \xi_{1},\dots,\xi_{N}\right)
\in\mathbb{R}^{N}$ and for $Leb_{2N}$-a.e. $X_{0}^{N}\in\Delta_{N}^{c}$, there
is a unique solution $X_{\cdot}^{N}:[0,\infty)\to\Delta_{N}^{c}$ to
(\ref{eq:odevortexdynamics}).\newline Considering the initial condition as a
random variable distributed as $Leb_{2N}$ the stochastic process $\left(
X_{t}^{N}\right)  _{t}$ is stationary with invariant marginal law $Leb_{2N}$.
\end{theorem}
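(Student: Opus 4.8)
The plan is to deduce this from the results just established, the only real work being a careful tracking of the full-measure set on which the flow is well defined. The first assertion---global existence and uniqueness for $Leb_{2N}$-a.e. $X_0^N\in\Delta_N^c$---requires nothing new: it is precisely Theorem~\ref{th:nocollision}, which off a Lebesgue-null set produces a collision-free trajectory on every $[0,T]$, hence a global solution on $[0,\infty)$, with uniqueness inherited from the local Lipschitz property of $K$ away from the diagonal. I write $A\subset(\mathbb{T}^2)^N$ for the conull set of admissible data and $\Phi_t(x^0)=x(t|x^0)$ for the flow, which by the preceding lemma is measurable and $Leb_{2N}$-preserving.

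The second assertion is the standard fact that a measure-preserving semigroup started from its invariant measure is stationary, which I would argue in three short steps. First, I record the semigroup identity $\Phi_{t+s}=\Phi_t\circ\Phi_s$ on $A$: by uniqueness in Theorem~\ref{th:nocollision} the curve $r\mapsto x(s+r|x^0)$ solves (\ref{eq:odevortexdynamics}) with datum $x(s|x^0)$, so the two sides coincide. Since each $\Phi_s$ preserves $Leb_{2N}$ and $A$ is conull, the images $\Phi_s(A)$ and preimages $\Phi_s^{-1}(A)$ remain conull, so compositions are defined almost everywhere and $A$ may be shrunk to a conull $\Phi$-invariant set on which the semigroup law holds genuinely. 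Second, with $X_0^N\sim Leb_{2N}$ the marginal of $X_t^N=\Phi_t(X_0^N)$ is $(\Phi_t)_{*}Leb_{2N}=Leb_{2N}$, which is the claimed invariant marginal.

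Third, for full stationarity I fix $h\ge 0$ and times $0\le t_1<\dots<t_k$ and write $X_{t_i+h}^N=\Phi_{t_i}(\Phi_h(X_0^N))$. Setting $Y_0:=\Phi_h(X_0^N)$, invariance gives $Y_0\sim Leb_{2N}$, so the vector $(\Phi_{t_1}(Y_0),\dots,\Phi_{t_k}(Y_0))$---a fixed measurable function of $Y_0$---has the same law as $(\Phi_{t_1}(X_0^N),\dots,\Phi_{t_k}(X_0^N))=(X_{t_1}^N,\dots,X_{t_k}^N)$. Hence every finite-dimensional distribution is invariant under forward time-shifts and $(X_t^N)_t$ is stationary. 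The only point demanding care is the measurability and flow-invariance of the conull domain $A$; this is exactly what the measure-preservation lemma supplies, and past it the argument is routine.
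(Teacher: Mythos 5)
Your proposal is correct and follows exactly the paper's route: the paper derives Theorem~\ref{th:exvortex} by combining Theorem~\ref{th:nocollision} (a.e.\ global existence and uniqueness) with the lemma that $x^{0}\mapsto x(t|x^{0})$ is measurable and Lebesgue-measure preserving, dismissing stationarity as ``a direct consequence of the previous lemma.'' You simply spell out the routine details the paper omits (the a.e.\ semigroup identity via uniqueness, invariance of the marginal, and shift-invariance of the finite-dimensional distributions), and your handling of the conull flow-invariant set is sound since stationarity only ever involves finitely many times at once.
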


\section{Construction of a white noise solution}

Now let us consider an i.i.d. sequence $\left(  \xi_{n}\right)  $ of $N(0,1)$
random variables for the intensities and an i.i.d. sequence $\left(  X_{0}%
^{n}\right)  $ of uniformly distributed $\mathbb{T}^{2}$-valued random
variables for the initial positions, both on a probability space
$(\Xi,\mathcal{F},\mathbb{P})$ and independent of each other. We denote by
\begin{align*}
\lambda^{0}_{N}:=(\otimes_{N} N(0,1)) \otimes\; Leb_{2N}%
\end{align*}
the law of the random vector $\left(  (\xi_{1},X_{0}^{1}),\dots,(\xi_{N}%
,X_{0}^{N})\right)  $. Then, by Theorem \ref{th:exvortex} we have that for
a.e. value of $\left(  (\xi_{1},X_{0}^{1}),\dots,(\xi_{N},X_{0}^{N})\right)  $
there is a unique solution $X_{\cdot}^{N} :[0,\infty)\to\Delta_{N}^{c}$ to
(\ref{eq:odevortexdynamics}). We are interested in a measure-valued vorticity
field $\theta_{N}$ which is a linear combination of the processes $X_{\cdot
}^{N}$ weighted with the intensities. For this field we obtain results similar
to those for the white noise and it will be the basis to construct a white
noise solution to mSQG.

\begin{theorem}
\label{th:propthetan} Let the intensities and initial positions $(\xi
,X_{0}):=\left(  (\xi_{1},X_{0}^{1}),\dots, (\xi_{N},X_{0}^{N})\right)  $ be
distributed as $\lambda_{N}^{0}$ and $(X_{\cdot}^{1},\dots, X_{\cdot}%
^{N}):(0,\infty)\to\Delta_{N}^{c}$ be the corresponding dynamics. Then we have
for the measure-valued field
\begin{align*}
\theta^{N}_{t}(\xi,X_{0}):=\frac{1}{\sqrt N}\sum_{n=1}^{N} \xi_{n}%
\delta_{X_{t}^{n}(\xi,X_{0})}%
\end{align*}

\begin{itemize}
\item[i)] $\theta^{N}_{t}$ is stationary and space homogeneous.

\item[ii)] $\partial_{t} \left<  \theta_{t}^{N},\phi\right>  =\left<
\theta_{t}^{N} \otimes\theta_{t}^{N},H_{\phi}\right>  $ for all $t>0$ and
$\phi\in C^{\infty}(\mathbb{T}^{2})$.

\item[iii)] $\mathbb{E}\left[  \left<  \theta_{t}^{N}\otimes\theta_{t}%
^{N},H_{\phi}\right>  ^{2} \right]  \leq c \|H_{\phi}\|_{L^{2}(\mathbb{T}%
^{2}\times\mathbb{T}^{2})}^{2}$ for all $t>0$ and $\phi\in C^{\infty
}(\mathbb{T}^{2})$.

\item[iv)] $\mathbb{E}\left[  \|\theta_{t}^{N}\|^{p}_{H^{-1-\delta}%
(\mathbb{T}^{2})} \right]  \leq c_{p,\delta}$ for all $p\geq1, \delta>0$
independent of $t$ and $N$.

\item[v)] $\theta_{t}^{N} \overset{law}{\rightharpoonup}\theta_{WN}$ for all
$t>0$, where $\theta_{WN}$ denotes white noise and convergence takes place in
$H^{-1-\delta}(\mathbb{T}^{2})$ for all $\delta>0$.
\end{itemize}
\end{theorem}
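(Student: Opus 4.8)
The plan is to reduce all five assertions to one structural fact: at every fixed time $t$ the positions $(X_t^1,\dots,X_t^N)$ are i.i.d.\ uniform on $\mathbb{T}^2$ and, moreover, independent of the intensities $(\xi_1,\dots,\xi_N)$. I would establish this first. For each frozen value of $\xi$ the vector field in (\ref{eq:odevortexdynamics}) is divergence free, so the argument behind Theorem \ref{th:exvortex} shows that the flow $x^0\mapsto x(t\mid x^0,\xi)$ preserves $Leb_{2N}$ for that $\xi$. Hence the conditional law of $X_t$ given $\xi$ is $Leb_{2N}$ irrespective of $\xi$, which simultaneously gives $X_t\sim Leb_{2N}$ and $X_t$ independent of $\xi$; in particular, conditioning on $X_t$ leaves $(\xi_n)$ i.i.d.\ $N(0,1)$, so that, given the positions, $\theta_t^N$ is a centered Gaussian element of $H^{-1-\delta}$. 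Everything below is Gaussian bookkeeping resting on this observation.

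For i), fix $\xi$: the autonomous, $Leb_{2N}$-preserving flow started from the invariant law $Leb_{2N}$ is a stationary process in $t$, and adjoining the time-constant $\xi$ keeps the pair $(X_t,\xi)$ stationary; since $\theta_t^N$ is a fixed measurable function of this pair, it is stationary. Space homogeneity follows from translation covariance of the dynamics (the kernel depends on differences only, so $X_t^n(X^0+a,\xi)=X_t^n(X^0,\xi)+a$) together with translation invariance of the uniform initial law, so the shift $X^0\mapsto X^0+a$ sends a solution to an equally distributed solution and maps $\theta_t^N$ to its spatial translate. For ii) I would differentiate $\langle\theta_t^N,\phi\rangle=\tfrac{1}{\sqrt N}\sum_n\xi_n\phi(X_t^n)$ in time, insert the ODE (\ref{eq:odevortexdynamics}), and perform the Schochet symmetrisation of Section 2.2 on the resulting double sum, using $K(x)=-K(-x)$ and $H_\phi(x,y)=K(x-y)\cdot\nabla(\phi(x)-\phi(y))$; the diagonal $n=m$ is absent because $K(0)=0$, and the symmetrised double sum yields the pairing $\langle\theta_t^N\otimes\theta_t^N,H_\phi\rangle$ of Definition \ref{def:wnsolution}.

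For iii) I would write $\langle\theta_t^N\otimes\theta_t^N,H_\phi\rangle=\tfrac1N\sum_{n\ne m}\xi_n\xi_m H_\phi(X_t^n,X_t^m)$ (the diagonal vanishes since $H_\phi(x,x)=0$, and off-diagonal pairs are a.s.\ distinct so the evaluation makes sense), square it, and take expectations using that $\xi$ is independent of $X_t$. The fourth Gaussian moment $\mathbb{E}[\xi_n\xi_m\xi_{n'}\xi_{m'}]$ is given by Wick's formula; since both index pairs are off-diagonal only the pairings $\{n=n',m=m'\}$ and $\{n=m',m=n'\}$ survive, each contributing $\tfrac{N(N-1)}{N^2}\,\mathbb{E}[H_\phi(X^n,X^m)^2]=\tfrac{N(N-1)}{N^2}\|H_\phi\|_{L^2}^2$ by independence and uniformity of the positions (and $H_\phi(x,y)=H_\phi(y,x)$ for the second), so the bound holds with $c=2$. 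For iv) I would condition on $X_t$: there $\theta_t^N$ is a centered Gaussian in $H^{-1-\delta}$ with $\mathbb{E}[|\widehat{\theta_t^N}(k)|^2\mid X_t]=1$ for every $k$, whence $\mathbb{E}[\|\theta_t^N\|_{H^{-1-\delta}}^2\mid X_t]=\sum_{k\in\mathbb{Z}^2}(1+|k|^2)^{-1-\delta}=:C_\delta<\infty$ (finite precisely because $\delta>0$), a constant free of $X_t$, $t$ and $N$. The equivalence of Gaussian moments (Fernique / Kahane--Khintchine) then yields $\mathbb{E}[\|\theta_t^N\|_{H^{-1-\delta}}^p\mid X_t]\le C_p\,C_\delta^{p/2}$, and taking expectations gives the uniform bound $c_{p,\delta}=C_pC_\delta^{p/2}$.

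For v) I would combine the central limit theorem with tightness. Fixing $\phi$, the summands $\xi_n\phi(X_t^n)$ are i.i.d., centered, with variance $\mathbb{E}[\xi_n^2]\int_{\mathbb{T}^2}\phi^2\,\mathrm{d}x=\|\phi\|_{L^2}^2$, so $\langle\theta_t^N,\phi\rangle\Rightarrow N(0,\|\phi\|_{L^2}^2)$; by the Cram\'er--Wold device the finite-dimensional distributions converge to those of a Gaussian field with covariance $\langle\phi,\psi\rangle$, i.e.\ the characteristic functional converges to $\exp(-\tfrac12\|\phi\|_{L^2}^2)$, which identifies the limit as white noise. The bound iv) with some $\delta'<\delta$, the compact embedding $H^{-1-\delta'}\hookrightarrow H^{-1-\delta}$, and Chebyshev's inequality give tightness of $\{\theta_t^N\}_N$ in $H^{-1-\delta}$; tightness together with convergence of the characteristic functionals upgrades this to weak convergence in $H^{-1-\delta}$. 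The one genuine obstacle is not any single computation but the fact underlying all of them---that the positions stay i.i.d.\ uniform and independent of the intensities at each $t$---which has to be read off from the per-$\xi$ Lebesgue-invariance of the (generally collisional) vortex flow of Section 3; once that is secured, i)--v) are essentially Gaussian calculations.
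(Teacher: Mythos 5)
Your proposal is correct, and for the two items the paper actually proves --- ii) and iii) --- it coincides with the paper's own argument: the same differentiation of $\frac{1}{\sqrt N}\sum_n \xi_n\phi(X_t^n)$ followed by Schochet symmetrisation using $K(x)=-K(-x)$ and $K(0)=0$ for ii), and the same fourth-moment expansion for iii), where $H_\phi(x,x)=0$ kills the pairing $n=i$, $m=j$ and the two surviving Wick pairings each contribute $\|H_\phi\|_{L^2}^2$ by symmetry of $H_\phi$. For i), iv) and v) the paper gives no proof and simply cites \cite{Flandoli}, so your self-contained treatments are an addition rather than a divergence; notably, your unifying structural fact --- that per-$\xi$ Lebesgue invariance of the a.e.-defined flow of Theorem \ref{th:exvortex} forces the joint law of $(\xi,X_t)$ to remain $\lambda_N^0$, hence positions i.i.d.\ uniform and independent of the intensities at every $t$ --- is exactly what the paper records without proof in the sentence following the theorem, and it is also the fact silently invoked when the paper factorizes $\mathbb{E}[\xi_n\xi_i\xi_m\xi_j]\,\mathbb{E}[H_\phi(X_t^n,X_t^i)H_\phi(X_t^m,X_t^j)]$ in iii), so making it explicit is a genuine improvement in rigor. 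Your iv) via conditional Gaussianity ($\mathbb{E}[|\widehat{\theta_t^N}(k)|^2\mid X_t]=1$ for all $k$, then Fernique/moment equivalence) and your v) via Cram\'er--Wold CLT for the i.i.d.\ summands $\xi_n\phi(X_t^n)$ plus tightness from iv) with $\delta'<\delta$ and the compact embedding $H^{-1-\delta'}\hookrightarrow H^{-1-\delta}$ are both sound and are the natural analogues of the arguments in the cited reference. One shared bookkeeping remark: with $H_\phi(x,y)=K(x-y)\cdot\nabla(\phi(x)-\phi(y))$ the symmetrised double sum is literally $\frac{1}{2N}\sum_{n,i}\xi_n\xi_i H_\phi(X_t^n,X_t^i)$, i.e.\ half of the pairing $\langle\theta_t^N\otimes\theta_t^N,H_\phi\rangle$; the paper drops this factor $\frac12$ as well (it appears in the weak formulation of Section 2.2 but not in Definition \ref{def:wnsolution}), so in mirroring that convention you inherit a notational inconsistency of the paper, not an error of your own.
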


\begin{proof}
We only show ii) and iii) here; for the other points, we refer to
\cite{Flandoli}. By definition of $\theta_{t}^{N}$ we have for $t>0$ and
$\phi\in C^{\infty}(\mathbb{T}^{2})$
\begin{align*}
\partial_{t} \left<  \theta_{t}^{N},\phi\right>   &  = \partial_{t}
\frac{1}{\sqrt N}\sum_{n=1}^{N}\xi_{n} \phi(X_{t}^{n}) = 
\frac{1}{\sqrt N}\sum_{n=1}^{N}\xi_{n}(\partial_{t} X_{t}^{n})
\nabla\phi(X_{t}^{n})\\
&  = \frac{1}{N}\sum_{n=1}^{N}\sum_{i=1}^{N}\xi_{n}\xi_{i} K(X_{t}^{n}%
-X_{t}^{i}) \nabla\phi(X_{t}^{n})\\
&  = \frac{1}{2N}\sum_{n=1}^{N}\sum_{i=1}^{N}\xi_{n}\xi_{i} K(X_{t}^{n}%
-X_{t}^{i}) (\nabla\phi(X_{t}^{n})-\nabla\phi(X_{t}^{i}))\\
&  = \frac{1}{N}\sum_{n=1}^{N}\sum_{i=1}^{N}\xi_{n}\xi_{i} H_{\phi}(X_{t}%
^{n},X_{t}^{i}) = \frac{1}{N}\sum_{n=1}^{N}\sum_{i=1}^{N}\xi_{n}\xi_{i}
\left<  \delta_{X_{t}^{n}} \otimes\delta_{X_{t}^{i}}, H_{\phi}\right> \\
&  =\left<  \theta_{t}^{N} \otimes\theta_{t}^{N},H_{\phi}\right>  ,
\end{align*}
which is the equality ii). This equality also yields
\begin{align*}
\left<  \theta_{t}^{N} \otimes\theta_{t}^{N},H_{\phi}\right>  ^{2} = \frac
{1}{N^{2}}\sum_{n=1}^{N}\sum_{i=1}^{N}\sum_{m=1}^{N}\sum_{j=1}^{N} \xi_{n}%
\xi_{i}\xi_{m}\xi_{j} \; H_{\phi}(X_{t}^{n},X_{t}^{i}) H_{\phi}(X_{t}%
^{m},X_{t}^{j})
\end{align*}
and taking the expectation, we deduce from the independence of the random
variables
\begin{align*}
\mathbb{E}\left[  \left<  \theta_{t}^{N} \otimes\theta_{t}^{N},H_{\phi
}\right>  ^{2}\right]  = \frac{1}{N^{2}}\sum_{n=1}^{N}\sum_{i=1}^{N}\sum
_{m=1}^{N}\sum_{j=1}^{N} \mathbb{E}\left[  \xi_{n}\xi_{i}\xi_{m}\xi
_{j}\right]  \; \mathbb{E}\left[  H_{\phi}(X_{t}^{n},X_{t}^{i}) H_{\phi}%
(X_{t}^{m},X_{t}^{j})\right]  .
\end{align*}
On the one hand we have $\mathbb{E}\left[  \xi_{n}\right]  =0$ ($1\leq n \leq
N$) and the $\xi_{n}$ are independent, so only terms with $n=i$ and $m=j$,
$n=m$ and $j=i$ or $n=j$ and $m=i$ contribute to the sum and on the other hand
we have $H_{\phi}(x,x)=0$, so we can also exclude the case $n=i$ and $m=j$.
Altogether,
\begin{align*}
\mathbb{E}\left[  \left<  \theta_{t}^{N} \otimes\theta_{t}^{N},H_{\phi
}\right>  ^{2}\right]  =  &  \frac{1}{N^{2}}\sum_{n=1}^{N}\sum_{i=1}^{N}
\mathbb{E}\left[  \xi_{n}^{2}\xi_{i}^{2}\right]  \; \mathbb{E}\left[  H_{\phi
}(X_{t}^{n},X_{t}^{i})^{2}\right] \\
&  +\frac{1}{N^{2}}\sum_{n=1}^{N}\sum_{i=1}^{N} \mathbb{E}\left[  \xi_{n}%
^{2}\xi_{i}^{2}\right]  \; \mathbb{E}\left[  H_{\phi}(X_{t}^{n},X_{t}^{i})
H_{\phi}(X_{t}^{i},X_{t}^{n})\right] \\
=  &  \frac{2}{N^{2}}\sum_{n=1}^{N}\sum_{i=1}^{N} \mathbb{E}\left[  \xi
_{n}^{2}\xi_{i}^{2}\right]  \; \mathbb{E}\left[  H_{\phi}(X_{t}^{n},X_{t}%
^{i})^{2}\right] \\
\leq &  c \sup_{1\leq n,i\leq N} \mathbb{E}\left[  H_{\phi}(X_{t}^{n}%
,X_{t}^{i})^{2}\right] \\
=  &  c \|H_{\phi}\|_{L^{2}(\mathbb{T}^{2}\times\mathbb{T}^{2})}^{2},
\end{align*}
where we used the invariance of the Lebesgue measure under the process
$X_{t}^{n}$ once again.
\end{proof}\ \\

Part i) of the theorem also implies that the law of $\left(  (\xi_{1}%
,X_{t}^{1}),\dots, (\xi_{N},X_{t}^{N})\right)  $ is $\lambda_{N}^{0}$ for all
$t$. The process $\theta_{\cdot}^{N}$ is not used directly to construct a
white noise solution, but by Skorokhod's representation theorem we can find
processes with the same law which converge to a white noise solution. To do
so, we first have to prove tightness.

\begin{lemma}
\label{lemma:qnconvweakly} Let $\theta_{\cdot}^{N}$ be as in Theorem
\ref{th:propthetan} and $Q^{N}$ denote its law. Then the sequence $(Q_{N}%
)_{N}$ is tight in $C^{1}([0,T];H^{-1-}(\mathbb{T}^{2}))$ and converges weakly
to some probability measure $Q$ in this space.
\end{lemma}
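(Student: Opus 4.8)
The plan is to establish tightness separately in each space $C([0,T];H^{-1-\delta}(\mathbb{T}^{2}))$, $\delta>0$, and then pass to the intersection $H^{-1-}=\cap_{\delta>0}H^{-1-\delta}$ by a diagonal argument: if for every $k$ we have a compact $\mathcal{K}_{k}\subset C([0,T];H^{-1-1/k})$ with $\inf_{N}Q^{N}(\mathcal{K}_{k})\geq 1-\eta 2^{-k}$, then $\cap_{k}\mathcal{K}_{k}$ is compact in $C([0,T];H^{-1-})$ and carries mass at least $1-\eta$. For fixed $\delta$ I would use the classical Kolmogorov--Chentsov tightness criterion in $C([0,T];E)$ with $E=H^{-1-\delta}$: it suffices to produce (a) a more regular space $E_{1}=H^{-1-\delta/2}$ compactly embedded in $E$ in which the one-time marginals are uniformly bounded in probability, and (b) an increment estimate $\mathbb{E}[\|\theta^{N}_{t}-\theta^{N}_{s}\|_{E}^{a}]\leq C|t-s|^{1+b}$ with $a,b>0$ uniform in $N$. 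The time-derivative implicit in the $C^{1}$ statement is governed by point ii) of Theorem~\ref{th:propthetan}, $\partial_{t}\langle\theta^{N}_{t},\phi\rangle=\langle\theta^{N}_{t}\otimes\theta^{N}_{t},H_{\phi}\rangle$, which I treat in the same weak topology used below.

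For (a) I would invoke point iv) of Theorem~\ref{th:propthetan}, $\mathbb{E}[\|\theta^{N}_{t}\|_{H^{-1-\delta/2}}^{p}]\leq c_{p,\delta}$ uniformly in $t,N$, together with the Rellich compact embedding $H^{-1-\delta/2}\hookrightarrow\hookrightarrow H^{-1-\delta}$; Markov's inequality then makes the balls $\{\|\cdot\|_{H^{-1-\delta/2}}\leq R\}$ carry uniformly high probability, giving tightness of the marginals. For (b) I would start from the integrated form of ii), $\langle\theta^{N}_{t}-\theta^{N}_{s},\phi\rangle=\int_{s}^{t}\langle\theta^{N}_{r}\otimes\theta^{N}_{r},H_{\phi}\rangle\d r$, apply Cauchy--Schwarz in time and point iii), $\mathbb{E}[\langle\theta^{N}_{r}\otimes\theta^{N}_{r},H_{\phi}\rangle^{2}]\leq c\|H_{\phi}\|_{L^{2}}^{2}$, to get $\mathbb{E}[|\langle\theta^{N}_{t}-\theta^{N}_{s},\phi\rangle|^{2}]\leq c|t-s|^{2}\|H_{\phi}\|_{L^{2}}^{2}$. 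Testing against $\phi=e_{n}$ and summing gives $\mathbb{E}[\|\theta^{N}_{t}-\theta^{N}_{s}\|_{H^{-s_{0}}}^{2}]\leq c|t-s|^{2}\sum_{n}(1+|n|^{2})^{-s_{0}}\|H_{e_{n}}\|_{L^{2}}^{2}$; since $|H_{\phi}(x,y)|\leq c\|D^{2}\phi\|_{\infty}|x-y|^{-(1-\epsilon)}$ yields $\|H_{e_{n}}\|_{L^{2}}\lesssim|n|^{2}$, this series converges for any $s_{0}>3$, producing a quantitative time modulus in the weak space $H^{-s_{0}}$.

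The crux is that these two inputs live in different spaces: the good moments of iv) are at the strong, fixed-time level $H^{-1-\delta/2}$, whereas the increment control is only at the weak level $H^{-s_{0}}$, $s_{0}>3$. I would bridge them by interpolation, $\|u\|_{H^{-1-\delta}}\leq\|u\|_{H^{-1-\delta/2}}^{1-\lambda}\|u\|_{H^{-s_{0}}}^{\lambda}$ with $\lambda=\tfrac{\delta/2}{s_{0}-1-\delta/2}\in(0,1)$, raised to a power $a$ and estimated by Hölder's inequality, using that iv) supplies arbitrarily high moments of $\|\theta^{N}_{t}-\theta^{N}_{s}\|_{H^{-1-\delta/2}}$ while iii) supplies the second moment of $\|\theta^{N}_{t}-\theta^{N}_{s}\|_{H^{-s_{0}}}$. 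Choosing $a$ with $a\lambda\in(1,2)$ (possible since $\lambda$ is fixed) gives $\mathbb{E}[\|\theta^{N}_{t}-\theta^{N}_{s}\|_{H^{-1-\delta}}^{a}]\leq C|t-s|^{a\lambda}$ with $a\lambda>1$, which is exactly condition (b). The main obstacle is precisely this balancing act — extracting a time exponent strictly above $1$ in the target norm from only a second moment of the weak-space increment — which forces the use of the full strength (all $p$) of iv).

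With (a) and (b) in hand, Kolmogorov--Chentsov yields tightness in each $C([0,T];H^{-1-\delta})$, hence, via the diagonal argument, in $C([0,T];H^{-1-})$; the continuity of $t\mapsto\langle\theta^{N}_{t}\otimes\theta^{N}_{t},H_{\phi}\rangle$ along the collision-free paths supplies the corresponding control of the derivative in the weaker topology. Prokhorov's theorem then extracts a weakly convergent subsequence $Q^{N_{k}}\rightharpoonup Q$, which is all that is required for Theorem~\ref{theorem:approximation}, uniqueness of the limit being unavailable since uniqueness for the equation itself is open.
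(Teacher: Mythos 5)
Your proposal is correct, and it reaches the conclusion by genuinely different compactness machinery than the paper, while running on exactly the same three inputs from Theorem \ref{th:propthetan}: the uniform moments iv), the identity ii), and the second-moment bound iii) combined with $\|H_{e_k}\|_{L^2(\mathbb{T}^2\times\mathbb{T}^2)}\lesssim |k|^2$ (whence your threshold $s_0>3$, which is precisely the paper's $\gamma>3$). The paper converts these into boundedness in probability in $L^{\infty-}((0,T);H^{-1-}(\mathbb{T}^2))\cap W^{1,2}((0,T);H^{-\gamma}(\mathbb{T}^2))$ and then invokes a deterministic Aubin--Lions--Simon type compact embedding of that space into $C([0,T];H^{-1-}(\mathbb{T}^2))$ (citing \cite{Flandoli}, Corollary 27), finishing with Prokhorov. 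You instead avoid any cited embedding lemma and use the Kolmogorov--Chentsov tightness criterion: marginal tightness from iv) plus the Rellich embedding $H^{-1-\delta/2}\hookrightarrow\hookrightarrow H^{-1-\delta}$, the increment bound $\mathbb{E}\left[\|\theta^N_t-\theta^N_s\|^2_{H^{-s_0}}\right]\le c|t-s|^2$ in the weak space, and the interpolation--H\"older balancing with $\lambda=\tfrac{\delta/2}{s_0-1-\delta/2}$ and $a\lambda\in(1,2)$ to lift the time modulus to $H^{-1-\delta}$; your exponent bookkeeping is sound, and you correctly identify that this step consumes the full strength (all $p$) of iv), which the paper's route does not need beyond $p$ large for the $L^{\infty-}$ bound. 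Your diagonal passage from the spaces $C([0,T];H^{-1-1/k}(\mathbb{T}^2))$ to $C([0,T];H^{-1-}(\mathbb{T}^2))$ is also fine, since $d_{\mathcal{X}}$ metrizes simultaneous convergence in all these spaces. Two caveats, which apply equally to the paper's own proof: the ``$C^1$'' in the statement is not literally obtained by either argument (the paper controls $\partial_t\theta^N$ only in $W^{1,2}((0,T);H^{-\gamma}(\mathbb{T}^2))$, and your closing remark about the derivative is only a sketch), but only $C([0,T];H^{-1-}(\mathbb{T}^2))$-tightness is used in Theorem \ref{th:mainresult}; and Prokhorov yields weak convergence only along a subsequence, which, as you note, is all that is required downstream.
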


\begin{proof}
First we show that $(Q_{N})_{N}$ is bounded in probability in $L^{p}%
((0,T);H^{-1-\delta} (\mathbb{T}^{2}))\cap W^{1,2}((0,T);H^{-\gamma}%
(\mathbb{T}^{2}))$ for any $p\geq1$, $\delta>0$, $\gamma>3$. Estimating the
$L^{p}$-part is easy; we have by Theorem \ref{th:propthetan} and the
stationarity of $\theta_{t}^{N}$
\begin{align*}
\mathbb{E}\left[  \int_{0}^{T}\|\theta_{t}^{N}\|^{p}_{H^{-1-\delta}%
(\mathbb{T}^{2})} dt \right]   &  = \int_{0}^{T}\mathbb{E}\left[  \|\theta
_{t}^{N}\|^{p}_{H^{-1-\delta}(\mathbb{T}^{2})} \right]  dt \leq c_{p,\delta}T.
\end{align*}
For the $W^{1,2}$-norm we replace $-1-\delta$ by $-\gamma$ and take $p=2$
which gives
\begin{align*}
\mathbb{E}\left[  \int_{0}^{T}\|\theta_{t}^{N}\|^{2}_{H^{-\gamma}%
(\mathbb{T}^{2})} dt \right]   &  \leq c_{2,\gamma}T.
\end{align*}
To estimate the derivative in time let $\phi\in C^{\infty}(\mathbb{T}^{2})$.
We have
\begin{align*}
\partial_{t} \left<  \theta_{t}^{N},\phi\right>  =\left<  \theta_{t}%
^{N}\otimes\theta_{t}^{N},H_{\phi} \right>
\end{align*}
which implies by using Theorem \ref{th:propthetan} again
\begin{align*}
\mathbb{E}\left[  |\partial_{t} \left<  \theta_{t}^{N},\phi\right>  |^{2}
\right]   &  = \mathbb{E} \left[  |\left<  \theta_{t}^{N}\otimes\theta_{t}%
^{N},H_{\phi}\right>  | \right]  \leq C \|H_{\phi}\|^{2}_{L^{2}(\mathbb{T}%
^{2}\times\mathbb{T}^{2})}.
\end{align*}
Choosing $\phi=e_{k}$ with $e_{k}(x)=e^{2ikx}$ ($k\in\mathbb{Z}^{2},
x\in\mathbb{T}^{2}$), we obtain
\begin{align*}
|H_{e_{k}}(x,y)|\leq c |k| \frac{\left| e_{k}(x)-e_{k}(y)\right|}{|x-y|
^{2-\epsilon}} \leq c|k|^{2} \frac{1}{|x-y|^{1-\epsilon}}%
\end{align*}
and thus, $\|H_{e_{k}}\|_{L^{2}(\mathbb{T}^{2}\times\mathbb{T}^{2})}\leq c
k^{2}$. This yields
\begin{align*}
\mathbb{E}\left[  \int_{0}^{T}\|\partial_{t}\theta_{t}^{N}\|^{2}_{H^{-\gamma
}(\mathbb{T}^{2})}dt \right]   &  = \mathbb{E}\left[  \int_{0}^{T}\sum
_{k\in\mathbb{Z}^{2}}(1+|k|^{2})^{-\gamma}| \left<  \partial_{t}\theta_{t}%
^{N},e_{k}\right>  |^{2}dt \right] \\
&  \leq cT \sum_{k\in\mathbb{Z}^{2}}(1+|k|^{2})^{-\gamma}|k|^{4}%
\end{align*}
and the sum is bounded due to $\gamma>3$. So we have that $(Q_{N})_{N}$ is
bounded in probability in $L^{\infty-}((0,T);H^{-1-}(\mathbb{T}^{2}))\cap
W^{1,2}((0,T);H^{-\gamma} (\mathbb{T}^{2}))$ and this space embeds compactly in
$C([0,T],H^{-1-} (\mathbb{T}^{2}))$ (see \cite{Flandoli}, Corollary 27).
Applying now Prokhorov's theorem proves the weak convergence to some
probability measure $Q$.
\end{proof}\ \\

Finally, we have everything in place to prove Theorems \ref{theorem:existence} 
and \ref{theorem:approximation}. For the reader's convenience, let us repeat 
their statements at this point.

\begin{theorem}\label{th:mainresult}
For all $\epsilon\in(0,1)$, there exists a probability space $(\Xi,\mathcal{F},P)$ 
and a measurable map $\theta_{\cdot}:\Xi\times[0,T]\to C^{\infty}(\mathbb{T}^{2})
^{\prime}$ ($T>0$ arbitrary) such that

\begin{itemize}
\item[i)] $\theta_{\cdot}$ is a white noise solution to mSQG and $(\theta
_{t})$ is stationary.

\item[ii)] The random point vortex system defined on $(\Xi,\mathcal{F},P)$
has a subsequence which converges P-a.s. to $\theta$ in $C([0,T];H^{-1-}%
(\mathbb{T}^{2}))$.
\end{itemize}
\end{theorem}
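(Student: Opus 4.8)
The plan is to combine the tightness already established in Lemma~\ref{lemma:qnconvweakly} with Skorokhod's representation theorem and then to pass to the limit in the weak formulation, the only genuinely delicate point being the nonlinear term. By Lemma~\ref{lemma:qnconvweakly} the laws $Q^N$ of $\theta^N_\cdot$ converge weakly, along a subsequence $N_k$, to a probability measure $Q$ on $\mathcal{X}=C([0,T];H^{-1-}(\mathbb{T}^2))$. First I would invoke Skorokhod's representation theorem to realize, on a single probability space $(\Xi,\mathcal{F},P)$, random variables $\theta^{N_k}_\cdot$ with laws $Q^{N_k}$ together with a limit $\theta_\cdot$ of law $Q$, such that $\theta^{N_k}_\cdot\to\theta_\cdot$ $P$-a.s.\ in $\mathcal{X}$. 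This already yields the approximation statement~ii), provided the limit is the solution we seek; it then remains to verify part~i).

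Two of the three defining properties of a white noise solution are inherited directly from the approximants. Since by Theorem~\ref{th:propthetan}\,v) the marginal $\theta^{N}_t$ converges in law to white noise for every fixed $t$, and $\theta^{N_k}_t\to\theta_t$ $P$-a.s., each marginal $\theta_t$ has the white noise law; stationarity of $\theta_\cdot$ follows because each $\theta^{N}_\cdot$ is stationary (Theorem~\ref{th:propthetan}\,i)), stationarity is a property of the law, and it is preserved under weak convergence. The substantive task is to pass to the limit in the integrated weak equation
\[
\langle\theta^{N_k}_t,\phi\rangle=\langle\theta^{N_k}_0,\phi\rangle+\int_0^t\langle\theta^{N_k}_s\otimes\theta^{N_k}_s,H_\phi\rangle\d s,
\]
which holds pathwise by Theorem~\ref{th:propthetan}\,ii). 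The linear terms converge $P$-a.s., because pairing an $H^{-1-}$-distribution with the smooth test function $\phi$ is continuous; the obstacle is the quadratic term, for which $H_\phi$ is merely $L^2$ and $H^{-1-}$-convergence of $\theta^{N_k}$ gives no direct control.

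The main work, therefore, is the nonlinear limit, which I would carry out by a double approximation using the smooth kernels $H_\phi^n\in H^{2+}(\mathbb{T}^2\times\mathbb{T}^2)$ from Corollary~\ref{cor:defhphi}. Writing the time-integrated nonlinear term as $A_N$, its candidate limit as $A$, and $A_N^n,A^n$ for the corresponding objects built from $H_\phi^n$, I would split $A_{N_k}-A$ through $A_{N_k}^n$ and $A^n$. For fixed $n$ one has $A_{N_k}^n\to A^n$ as $k\to\infty$, because $H_\phi^n$ is regular enough that $\langle\theta^{N_k}_s\otimes\theta^{N_k}_s,H_\phi^n\rangle\to\langle\theta_s\otimes\theta_s,H_\phi^n\rangle$ pathwise, the time integration being controlled by the uniform moment bound of Theorem~\ref{th:propthetan}\,iv) together with dominated convergence. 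The two remaining errors are bounded uniformly in $N$: on the finite-$N$ side, the estimate of Theorem~\ref{th:propthetan}\,iii), applied to the symmetric function $H_\phi-H_\phi^n$ (which still vanishes on the diagonal), gives $\mathbb{E}\,|\langle\theta^{N_k}_s\otimes\theta^{N_k}_s,H_\phi-H_\phi^n\rangle|^2\le c\,\|H_\phi-H_\phi^n\|_{L^2}^2$, while on the limit side the white noise isometry of Corollary~\ref{cor:defhphi}, applied to $H_\phi-H_\phi^n$, gives exactly $\|H_\phi-H_\phi^n\|_{L^2}^2$; both vanish as $n\to\infty$ uniformly in $k$ and, by stationarity, in $s$. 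Choosing $n$ large to make the outer errors small and then $k$ large shows $A_{N_k}\to A$ in $L^2(\Xi)$, hence $P$-a.s.\ along a further subsequence. Passing to the limit in the displayed identity then yields the weak formulation of Definition~\ref{def:wnsolution} for fixed $\phi$ and $t$; extending it to all $\phi$ and all $t$ simultaneously is routine, using a countable dense set of test functions together with the continuity of the paths. I expect this double-limit interchange—securing the error bounds uniformly in $N$ and identifying the renormalized limiting nonlinearity with the intrinsic white noise object of Corollary~\ref{cor:defhphi}—to be the principal difficulty.
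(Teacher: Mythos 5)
Your proposal follows the paper's proof almost step for step: tightness from Lemma \ref{lemma:qnconvweakly}, Skorokhod's representation theorem, identification of the marginals and stationarity from Theorem \ref{th:propthetan} i) and v), and then the double approximation of the nonlinear term by the smooth kernels $H^n_\phi$, with the error on the point-vortex side controlled uniformly in $N$ by Theorem \ref{th:propthetan} iii) applied to $H_\phi-H^n_\phi$ and on the limit side by the isometry of Corollary \ref{cor:defhphi}. This is exactly the paper's scheme; the only cosmetic difference is that the paper works with truncated expectations $\mathbb{E}\left[\,\left|\cdot\right|\wedge 1\,\right]$ (convergence in probability) where you work in $L^2(\Xi)$.

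There is, however, one genuine gap at the Skorokhod step. You assert that the pathwise identity of Theorem \ref{th:propthetan} ii) holds for the new processes and that part ii) of the theorem follows immediately, but Skorokhod's theorem only produces random variables in $C([0,T];H^{-1-}(\mathbb{T}^2))$ with the laws $Q_{N_k}$; it does not hand you intensities $\xi_n$ and trajectories $X^{n,N_k}_\cdot$ on the new space such that $\theta^{N_k}_t=N_k^{-1/2}\sum_n\xi_n\delta_{X^{n,N_k}_t}$ with $\left((\xi_1,X^{1,N_k}_0),\dots\right)$ distributed as $\lambda^0_{N_k}$. This representation is needed twice: the pairing $\langle\theta^{N_k}_s\otimes\theta^{N_k}_s,H_\phi\rangle$ with the merely-$L^2$ kernel $H_\phi$ is not a continuous (or even everywhere-defined) functional on $\mathcal{X}$, but is defined through the atomic structure, so transferring the weak equation and the moment bound iii) to the Skorokhod copies is not automatic from equality of laws; and part ii) of the theorem is a statement about \emph{the random point vortex system} on $(\Xi,\mathcal{F},P)$, which must therefore actually exist there. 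The paper closes this hole by extending the Skorokhod space via the method of \cite{Flandoli}, Lemma 28, producing on the enlarged space new random vectors with law $\lambda^0_{N}$ whose associated point-vortex fields are precisely the $\theta^{N_k}$. Once that step is inserted, the remainder of your argument coincides with the paper's and goes through.
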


\begin{proof}
From Lemma \ref{lemma:qnconvweakly} and Skorokhods representation theorem, it
follows that there is a probability space $(\overline{\Xi}, \overline
{\mathcal{F}},\overline{P})$ and $C([0,T];H^{-1-}(\mathbb{T}^{2}))$-valued
random variables $\overline{\theta_{\cdot}}^{N_{k}}$ with law $Q_{N_{k}}$
converging to some $\overline{\theta_{\cdot}}$ in $C([0,T];H^{-1-}%
(\mathbb{T}^{2}))$ ($Q_{N_{k}}$ a subsequence of $Q_{N}$). By the method
described in \cite{Flandoli}, Lemma 28 we can extend $(\overline{\Xi
},\overline{\mathcal{F}},\overline{P})$ to some probability space
$(\Xi,\mathcal{F},P)$, define new processes $\theta^{N_{k}}$ with law
$Q_{N_{k}}$ converging to a process $\theta_{\cdot}$ in $C([0,T];H^{-1-}%
(\mathbb{T}^{2}))$, and we have the representation
\begin{align*}
\theta_{t}^{N_{k}}=\frac{1}{\sqrt{N_{k}}}\sum_{n=1}^{N_{k}}\xi_{n}%
\delta_{X_{t}^{n,N_{k}}},
\end{align*}
where $(X_{\cdot}^{1,N_{k}},\dots,X_{\cdot}^{N_{k},N_{k}})$ is the solution to
(\ref{eq:odevortexdynamics}) for a new random vector (without renaming)
$\left(  (\xi_{1},X^{1,N_{k}}),\dots,(\xi_{N},X^{N_{k},N_{k}})\right)  $ with
law $\lambda_{N}^{0}$. What we have to show is that $\theta_{\cdot}$ is a
white noise solution in the sense of Definition \ref{def:wnsolution}. We know
already that $\theta_{t}$ is a white noise for all $t$ and from
\begin{align*}
\left<  \theta_{t}^{N_{k}},\phi\right>  -\left<  \theta_{0}^{N_{k}}%
,\phi\right>  -\int_{0}^{t}\left<  \theta_{s}^{N_{k}}\otimes\theta_{s}^{N_{k}%
},H_{\phi}\right>  ds=0
\end{align*}
follows
\begin{align*}
&  \mathbb{E} \left[  \left|  \left<  \theta_{t},\phi\right>  -\left<
\theta_{0},\phi\right>  -\int_{0}^{t}\left<  \theta_{s}\otimes\theta
_{s},H_{\phi}\right>  ds \right|  \wedge1 \right] \\
&  \leq\mathbb{E} \left[  \left|  \left<  \theta_{t}-\theta_{t}^{N_{k}}%
-\theta_{0}+\theta_{0}^{N_{k}}, \phi\right>  \right|  \wedge1\right]
+\mathbb{E} \left[  \int_{0}^{t} \left|  \left<  (\theta_{s} \otimes\theta
_{s})-(\theta^{N_{k}}_{s}\otimes\theta^{N_{k}}_{s}),H_{\phi}\right>  \right|
ds \wedge1 \right]  .
\end{align*}
The convergence of $\theta^{N_{k}}_{\cdot}$ to $\theta_{\cdot}$ in
$C([0,T];H^{-1-} (\mathbb{T}^{2}))$ implies
\begin{align*}
\mathbb{E} \left[  \left|  \left<  \theta_{t}-\theta_{t}^{N_{k}},\phi\right>
- \left<  \theta_{0}-\theta_{0}^{N_{k}},\phi\right>  \right|  \wedge1\right]
\to0 \quad(N_{k}\to\infty).
\end{align*}
For the nonlinear part, let $(H^{m}_{\phi})_{m}$ be a smooth approximation of
$H_{\phi}$ such that $H^{m}_{\phi}(x,y)=H^{m}_{\phi}(y,x)$ and $H^{m}_{\phi
}(x,x)=0$ for $x,y \in\mathbb{T}^{2}$. Then
\begin{align*}
\mathbb{E} \left[  \left|  \int_{0}^{t}\left<  \theta_{s}\otimes\theta
_{s},H_{\phi}-H^{m}_{\phi}\right>  ds\right|  \wedge1 \right]   &  \leq
\int_{0}^{t} \mathbb{E}\left[  \left|  \left<  \theta_{s}\otimes\theta
_{s},H_{\phi}-H^{m}_{\phi} \right>  ds\right|  \right] \\
&  \leq\int_{0}^{t} \mathbb{E}\left[  \left|  \left<  \theta_{s}\otimes
\theta_{s},H_{\phi}-H^{m}_{\phi} \right>  ds\right|  ^{2} \right]  ^{1/2}\\
&  \to0 \quad(m\to\infty)
\end{align*}
by our definition of $\left<  \theta_{s}\otimes\theta_{s},H_{\phi}\right>  $
and
\begin{align*}
\mathbb{E} \left[  \left|  \int_{0}^{t}\left<  \theta^{N_{k}}_{s}\otimes
\theta^{N_{k}}_{s}, H_{\phi}-H^{m}_{\phi}\right>  ds\right|  \wedge1 \right]
&  \leq\int_{0}^{t} \mathbb{E}\left[  \left|  \left<  \theta_{s}^{N_{k}%
}\otimes\theta_{s}^{N_{k}},H_{\phi}- H^{m}_{\phi}\right>  ds\right|  ^{2}
\right]  ^{1/2}\\
&  \leq c \| H_{\phi}-H^{m}_{\phi} \|_{L^{2}(\mathbb{T}^{2}\times
\mathbb{T}^{2})}\\
&  \to0 \quad(m\to\infty)
\end{align*}
independent of $N_{k}$. Thus,
\begin{align*}
\mathbb{E} \left[  \int_{0}^{t} \left|  \left<  (\theta_{s} \otimes\theta
_{s})-(\theta^{N_{k}}_{s}\otimes\theta^{N_{k}}_{s}),H_{\phi}\right>  \right|
ds \wedge1 \right]  \to0 \quad(N_{k}\to\infty)
\end{align*}
and this gives
\begin{align*}
\mathbb{E} \left[  \left|  \left<  \theta_{t},\phi\right>  -\left<  \theta
_{0},\phi\right>  -\int_{0}^{t}\left<  \theta_{s}\otimes\theta_{s},H_{\phi
}\right>  ds \right|  \wedge1 \right]  =0,
\end{align*}
i.e., $\left<  \theta_{t},\phi\right>  -\left<  \theta_{0},\phi\right>
-\int_{0}^{t}\left<  \theta_{s}\otimes\theta_{s},H_{\phi}\right>  ds=0$ P-a.s.
for all $\phi\in C^{\infty} (\mathbb{T}^{2})$.
\end{proof}

\end{document}